\theoremstyle{plain}
\newtheorem{definition}{Definition}
\newtheorem{lemma}{Lemma}
\newtheorem{proposition}{Proposition}
\newtheorem{remark}{Remark}
\newtheorem{theorem}{Theorem}
\numberwithin{equation}{section}
\begin{document}

\title{Dynamical inverse problem for Jacobi matrices.}
\author{ A. S. Mikhaylov}
\address{St. Petersburg   Department   of   V.A. Steklov    Institute   of   Mathematics
of   the   Russian   Academy   of   Sciences, 7, Fontanka, 191023
St. Petersburg, Russia and Saint Petersburg State University,
St.Petersburg State University, 7/9 Universitetskaya nab., St.
Petersburg, 199034 Russia.} \email{a.mikhaylov@spbu.ru}

\author{ V. S. Mikhaylov}
\address{St.Petersburg   Department   of   V.A.Steklov    Institute   of   Mathematics
of   the   Russian   Academy   of   Sciences, 7, Fontanka, 191023
St. Petersburg, Russia and Saint Petersburg State University,
St.Petersburg State University, 7/9 Universitetskaya nab., St.
Petersburg, 199034 Russia.} \email{v.mikhaylov@spbu.ru}

\keywords{inverse problem, discrete Schr\"odinger operator,
Boundary Control method, characterization of inverse data}
\date{July, 2016}

\maketitle

\noindent {\bf Abstract.} We consider the inverse dynamical
problem for the dynamical system with discrete time associated
with the semi-infinite Jacobi matrix. We solve the inverse problem
for such a system and answer a question on the characterization of
the inverse data. As a by-product we give a necessary and
sufficient condition for the measure on the real line line to be
the spectral measure of semi-infinite discrete Schrodinger
operator.

\section{Introduction.}

Given a positive sequence $\{a_0, a_1,\ldots\}$ and real $\{b_1,
b_2,\ldots  \}$ we consider the operator $H$ corresponding to
semi-infinite Jacobi matrix, defined on $l^2\ni
\phi=(\phi_0,\phi_1,\ldots)$, given by
\begin{align*}
(H\phi)_n&=a_{n}\phi_{n+1}+a_{n-1}\phi_{n-1}+b_n\phi_n,\quad
n\geqslant 1,\\
(H\phi)_0&=a_0\phi_1,\quad n=0.
\end{align*}
adding the Dirichlet boundary condition $\phi_0=0$ give rise to
the spectral measure $d\rho$ (in the case when $H$ is in the limit
circle case at infinity, see \cite{A,S}, this measure in non
unique and is paramertized by a point on a unit circle). When all
$a_n=1$, $n=\{0,1,\ldots\}$ the operator $H$ is called the
discrete Schr\"odinger operator. In \cite{SG} the authors set up a
question of the characterization of the spectral measure for the
discrete Schrodinger operator. The goal of the paper is to answer
this question. Below we formulate the main result: let
$\mathrm{T}_k(2\lambda)$ be the Chebyshev polynomials of the
second kind: i.e. they satisfy
\begin{equation*}
\left\{
\begin{array}l
\mathrm{T}_{t+1}+\mathrm{T}_{t-1}-\lambda \mathrm{T}_{t}=0,\\
\mathrm{T}_{0}=0,\,\, \mathrm{T}_1=1.
\end{array}
\right.
\end{equation*}
\begin{theorem}
The measure $d\rho$ is a spectral measure of discrete
Schr\"odinger operator if and only if for every $T\geqslant 1$ the
matrix $C^T$ with the entries
\begin{equation*}
C^T_{i,j}=\int_{-\infty}^\infty
T_{T-i+1}(\lambda)T_{T-j+1}(\lambda)\,d\rho(\lambda),\quad
i,j=1,\ldots,T.
\end{equation*}
is positive definite and $\det{C^T}=1.$
\end{theorem}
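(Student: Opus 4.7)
The plan is to identify the determinant of $C^T$ with the classical Hankel determinant of the moments of $d\rho$, and then to apply Favard's theorem together with the standard formulas relating a measure's orthogonal polynomials to the underlying Jacobi matrix. The first observation I would record is that the defining recurrence for the $T_n$'s forces each $T_n$ to be a polynomial of degree $n-1$ in $\lambda$ with leading coefficient $1$. Consequently the transition matrix between $\{T_1,\dots,T_T\}$ and the monomial basis $\{1,\lambda,\dots,\lambda^{T-1}\}$ is lower unitriangular, so the Gram matrix $G^T_{ij}=\int T_iT_j\,d\rho$ satisfies $\det G^T=D_T:=\det(m_{i+j-2})_{i,j=1}^T$, where $m_k=\int\lambda^k\,d\rho$. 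Since $C^T$ is obtained from $G^T$ by the symmetric index reversal $i\mapsto T-i+1$, this gives $\det C^T=\det G^T=D_T$, and positive definiteness of $C^T$ is equivalent to linear independence of $\{T_1,\dots,T_T\}$ in $L^2(d\rho)$, i.e.\ to $d\rho$ having at least $T$ points of support.

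For necessity, I would suppose $d\rho$ is the spectral measure of a Jacobi matrix $H$ with $a_n\equiv 1$. Then the orthonormal polynomials $\{\phi_n\}$ of $d\rho$ (which in the BC-method formalism are the generalised eigenfunctions of $H$ with the Dirichlet condition $\phi_0=0$) have leading-coefficient ratios $k_{n+1}/k_n=1/a_n=1$, so they too are monic of degree $n-1$. The change of basis between $\{T_n\}_{n=1}^T$ and $\{\phi_n\}_{n=1}^T$ is therefore lower unitriangular, and since the Gram matrix of the $\phi_n$'s is the identity, the Gram matrix of the $T_n$'s has determinant $1$. Infinite support of the spectral measure of a semi-infinite Jacobi matrix supplies the positive definiteness.

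For sufficiency, I would start from the assumption that $C^T$ is positive definite with $\det C^T=1$ for every $T\geq 1$, which via the reductions above becomes $D_T=1$ for every $T$ together with infinite support of $d\rho$. Favard's theorem then produces a unique Jacobi matrix $H$ whose coefficients $\{a_n,b_n\}$ are the recurrence data of the monic orthogonal polynomials $\{\pi_n\}$ of $d\rho$; the squared norms $h_n=\int\pi_n^2\,d\rho$ satisfy $h_T=D_{T+1}/D_T$ and $a_n^2=h_{n+1}/h_n$. Enforcing $D_T=1$ for every $T$ yields $h_n=1$ and hence $a_n=1$ for every $n\geq 0$, so $H$ is a discrete Schr\"odinger operator with spectral measure $d\rho$.

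The only substantive step is the algebraic identification $\det C^T=D_T$; the remaining ingredients (Favard's theorem and the squared-norm/recurrence-coefficient formula) are classical. In the BC-method framework of the paper this reduction is natural: $C^T$ is by construction the connecting operator of the dynamical system associated with the free discrete Schr\"odinger operator, so the Chebyshev polynomials appear built in when $C^T$ is expressed through the spectral measure, and the condition $\det C^T=1$ captures precisely the fact that the Jacobi matrix behind $d\rho$ has all off-diagonal weights equal to one, while the potential $\{b_n\}$ remains unconstrained.
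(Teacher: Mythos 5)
Your proposal is correct, but it takes a genuinely different route from the paper's. The paper deduces the theorem from its dynamical machinery: the spectral representation (\ref{SP_mes_d2}) identifies the matrix (\ref{C_ij}) with the connecting operator $C^T=(W^T)^*W^T$ of the system (\ref{Jacobi_dyn}); when $a_k\equiv 1$ the triangular factor in (\ref{WT1}) has unit determinant, which gives necessity, and sufficiency is obtained by invoking the dynamical characterization Theorem \ref{Teor_Sch}, i.e.\ recovering the potential through (\ref{BK}) and showing that the response vector of the reconstructed system coincides with the given one. You instead stay entirely on the spectral side: since each $T_n$ is monic of degree $n-1$, the change of basis to monomials is unitriangular, so $\det C^T$ equals the Hankel moment determinant $D_T$ (the index reversal $i\mapsto T-i+1$ only conjugates by a permutation), and positive definiteness amounts to $d\rho$ having at least $T$ points of support; necessity then follows because the orthonormal polynomials of a Schr\"odinger spectral measure are themselves monic, and sufficiency from Favard's theorem together with $h_n=D_{n+1}/D_n$ and $a_n^2=h_{n+1}/h_n$. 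The two arguments are essentially the same triangular factorization seen from the two sides of the transform: your unitriangular change of basis is the spectral counterpart of the paper's factor $\overline W^T$. Your route buys brevity and independence from the BC formalism; the paper's route buys the constructive inversion formulas (\ref{AK}), (\ref{BK}) and a treatment uniform with the Jacobi-case characterization Theorem \ref{Th_char}. One shared point of informality: in the sufficiency direction both you and the paper pass from the statement that $d\rho$ generates the recurrence coefficients of a Schr\"odinger matrix to the statement that $d\rho$ is a spectral measure of that operator; this is harmless here because $a_n\equiv 1$ forces the limit-point case by Carleman's criterion, so the moment problem is determinate, but it deserves an explicit sentence in either argument.
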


We use the dynamical approach: we consider the dynamical system
with discrete time associated with the Jacobi marix, which is a
natural analog of dynamical systems governed by the wave equation
on a semi-axis:
\begin{equation}
\label{Jacobi_dyn} \left\{
\begin{array}l
u_{n,t+1}+u_{n,t-1}-a_{n}u_{n+1,t}-a_{n-1}u_{n-1,t}-b_nu_{n,t}=0,\quad n,t\in \mathbb{N},\\
u_{n,-1}=u_{n,0}=0,\quad n\in \mathbb{N}, \\
u_{0,t}=f_t,\quad t\in \mathbb{N}\cup\{0\}.
\end{array}\right.
\end{equation}
By analogy with continuous problems \cite{B07}, we treat the real
sequence $f=(f_0,f_1,\ldots)$ as a \emph{boundary control}. The
solution to (\ref{Jacobi_dyn}) we denote by $u^f_{n,t}$. Having
fixed $\tau\in \mathbb{N}$, with (\ref{Jacobi_dyn}) we associate
the \emph{response operators}, which maps the control
$f=(f_0,\ldots f_{\tau-1})$ to $u^f_{1,t}$:
\begin{equation*}
\left(R^\tau f\right)_t:=u^f_{1,t},\quad t=1,\ldots, \tau.
\end{equation*}
The inverse problem we will be dealing with is to recover from
$R^{\tau}$ the sequences $\{b_1,b_2,\ldots,b_n\}$,
$\{a_0,a_1,\ldots,a_n\}$ for some $n$. This problems is a natural
discrete analog of the inverse problem for the wave equation where
the inverse data is the dynamical Dirichlet-to-Neumann map, see
\cite{B07}.

To treat the inverse dynamical problem we will use the Boundary
Control method \cite{B07} which was initially developed to treat
multidimensional dynamical inverse problems, but since then was
applied to multy- and one- dimensional inverse dynamical, spectral
and scattering problems, problems of signal processing and
identification problems.

In the second section we study the forward problem: for
(\ref{Jacobi_dyn}) we prove the analog of d'Alembert integral
representation formula, we also introduce and prove the
representation formulaes for the main operators of the BC method:
response operator, control and connecting operators. In the third
section we derive the equations for the inverse problem and give a
characterization of the dynamical inverse data for the case of
Jacobi matrix and for the case of discrete Schr\"odinger operator.
In the last section we derive the spectral representation
formulaes for response and connecting operators and use the
results obtained to prove Theorem 1.

\section{Forward problem, operators of the Boundary Control method.}

We fix some positive integer $T$. By $\mathcal{F}^T$ we denote the
\emph{outer space} of the system (\ref{Jacobi_dyn}), the space of
controls: $\mathcal{F}^T:=\mathbb{R}^T$, $f\in \mathcal{F}^T$,
$f=(f_0,\ldots,f_{T-1})$.

First, we derive the representation formulas for the solution to
(\ref{Jacobi_dyn}) which could be considered as analogs of known
formulas for the wave equation \cite{AM}.
\begin{lemma}
The solution to (\ref{Jacobi_dyn}) admits the representation
\begin{equation}
\label{Jac_sol_rep} u^f_{n,t}=\prod_{k=0}^{n-1}
a_kf_{t-n}+\sum_{s=n}^{t-1}w_{n,s}f_{t-s-1},\quad n,t\in
\mathbb{N}.
\end{equation}
where $w_{n,s}$ satisfies the Goursat problem
\begin{equation}
\label{Goursat} \left\{
\begin{array}l
w_{n,s+1}+w_{n,s-1}-a_nw_{n+1,s}-a_{n-1}w_{n-1,s}-b_nw_{n,s}=-\delta_{s,n}(1-a_n^2)\prod_{k=0}^{n-1}a_k,\,n,s\in \mathbb{N}, \,\,s>n,\\
w_{n,n}-b_n\prod_{k=0}^{n-1}a_k-a_{n-1}w_{n-1,n-1}=0,\quad n\in \mathbb{N},\\
w_{0,t}=0,\quad t\in \mathbb{N}_0.
\end{array}
\right.
\end{equation}
\end{lemma}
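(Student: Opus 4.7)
The plan is to substitute the d'Alembert-type ansatz (\ref{Jac_sol_rep}) into the discrete wave equation (\ref{Jacobi_dyn}), match the coefficient of each free parameter $f_m$ to zero, and read off the conditions on $w$ listed in (\ref{Goursat}). Existence and uniqueness of the solution to (\ref{Jacobi_dyn}) is elementary, since the three-term recurrence together with the initial and boundary data determines $u^f_{n,t}$ step by step; hence it is enough to exhibit a sequence in the stated form that solves the system.

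The first preparatory step is to verify the \emph{finite speed of propagation}: $u^f_{n,t} = 0$ whenever $t \leqslant n$, proved by an easy induction on $t$ using $u^f_{0,t}=f_t$, the trivial initial data, and the recurrence. This justifies the conventions $f_m \equiv 0$ for $m<0$ and $w_{n,s} \equiv 0$ for $s<n$, under which the sum in (\ref{Jac_sol_rep}) is always well-defined. Writing $p_n := \prod_{k=0}^{n-1}a_k$, I would then substitute the ansatz into (\ref{Jacobi_dyn}) and collect terms by the index of $f$. The coefficient of the wavefront $f_{t+1-n}$ reduces to $p_n - a_{n-1}p_{n-1}=0$, so the leading behaviour is consistent. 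The coefficient of $f_{t-n}$ gives the diagonal (corner) identity $w_{n,n}=a_{n-1}w_{n-1,n-1}+b_n p_n$, which is the second line of (\ref{Goursat}). For $s>n$ the coefficient of $f_{t-s-1}$ reproduces the homogeneous equation of (\ref{Goursat}). The remaining coefficient, at $s=n$, arising from the equation at $t=n+1$, uses the explicit wavefront value $u^f_{n+1,n+1}=a_n p_n f_0$ at the next lattice site, and produces precisely the inhomogeneous source $-\delta_{s,n}(1-a_n^2)\,p_n$. Finally, $w_{0,t}=0$ is forced by $u^f_{0,t}=f_t$, and the trivial initial data $u_{n,-1}=u_{n,0}=0$ are automatic from the support conventions.

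The main obstacle is the careful bookkeeping at the diagonal $s=n$: the values $w_{n+1,n}$ and $w_{n,n-1}$ formally sit below the diagonal and must be identified with zero, while $w_{n-1,n}$ and $w_{n,n}$ genuinely contribute. Correctly tracking which terms of the equation at $t=n+1$ come from the wavefront $p_n f_{t+1-n}$ versus from the sum, and recognising that $a_n u^f_{n+1,n+1}$ contributes $a_n^2 p_n f_0$ rather than $p_n f_0$, is what accounts for the source $(1-a_n^2)p_n$ and is the most delicate part of the derivation. Once this is resolved, verifying that (\ref{Jac_sol_rep}) with $w$ satisfying (\ref{Goursat}) solves (\ref{Jacobi_dyn}) becomes a mechanical computation, and uniqueness for the forward problem identifies the sequence produced by the ansatz with $u^f$.
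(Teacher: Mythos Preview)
Your proposal is correct and follows essentially the same route as the paper: plug the ansatz (\ref{Jac_sol_rep}) into the equation, collect coefficients of the various $f$'s, and read off the Goursat conditions. Your framing is in fact slightly more complete than the paper's, since you spell out the uniqueness of the forward problem and the finite-speed-of-propagation convention $w_{n,s}=0$ for $s<n$, which the paper uses without comment.
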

\begin{proof}
We assume that $u^f_{n,t}$ has a form (\ref{Jac_sol_rep}) with
unknown $w_{n,s}$ and plug it to equation in (\ref{Jacobi_dyn}):
\begin{eqnarray*}
0=\prod_{k=0}^{n-1}a_kf_{t+1-n}+\prod_{k=0}^{n-1}a_kf_{t-1-n}-a_n\prod_{k=0}^{n}a_kf_{t-n-1}-a_{n-1}\prod_{k=0}^{n-2}a_kf_{t-n+1}\\
-b_n\prod_{k=0}^{n-1}a_kf_{t-n}+\sum_{s=n}^{t}w_{n,s}f_{t-s}+\sum_{s=n}^{t-2}w_{n,s}f_{t-s-2}\\-a_n\sum_{s=n+1}^{t-1}w_{n+1,s}f_{t-s-1}
-a_{n-1}\sum_{s=n-1}^{t-1}w_{n-1,s}f_{t-s-1}-\sum_{s=n}^{t-1}b_nw_{n,s}f_{t-s-1}.
\end{eqnarray*}
Evaluating and changing the order of summation we get
\begin{eqnarray*}
0=\left(1-a_n^2\right)\prod_{k=0}^{n-1}a_kf_{t-n-1}-b_n\prod_{k=0}^{n-1}a_kf_{t-n}\\
-\sum_{s=n}^{t-1}f_{t-s-1}\left(b_nw_{n,s}+a_nw_{n+1,s}+a_{n-1}w_{n-1,s}\right)
+a_nw_{n+1,n}f_{t-n-1}\\
-a_{n-1}w_{n-1,n-1}f_{t-n}
+\sum_{s=n-1}^{t-1}w_{n,s+1}f_{t-s-1}+\sum_{s=n+1}^{t-1}w_{n,s-1}f_{t-s-1}\\
=\sum_{s=n}^{t-1}f_{t-s-1}\left(w_{n,s+1}+w_{n,s-1}-a_nw_{n+1,s}-a_{n-1}w_{n-1,s}-b_nw_{n,s}\right)\\
-b_n\prod_{k=0}^{n-1}a_kf_{t-n}+\left(1-a_n^2\right)\prod_{k=0}^{n-1}a_kf_{t-n-1}+a_nw_{n+1,n}f_{t-n-1}\\
-a_{n-1}w_{n-1,n-1}f_{t-n}+w_{n,n}f_{t-n}-w_{n,n-1}f_{t-n-1}.
\end{eqnarray*}
Finally we arrive at
\begin{eqnarray*}
\sum_{s=n}^{t-1}f_{t-s-1}\left(w_{n,s+1}+w_{n,s-1}-a_nw_{n+1,s}-a_{n-1}w_{n-1,s}-b_nw_{n,s}+\left(1-a_n^2\right)\prod_{k=0}^{n-1}a_k\delta_{sn}\right)\\
+f_{t-n}\left(w_{n,n}-a_{n-1}w_{n-1,n-1}-b_n\prod_{k=0}^{n-1}a_k\right)=0.
\end{eqnarray*}
Counting that $w_{n,s}=0$ when $n>s$ and arbitrariness of $f\in
\mathcal{F}^T$, we arrive at (\ref{Goursat}).
\end{proof}

\begin{definition}
For $a,b\in l^\infty$ we define the convolution $c=a*b\in
l^\infty$ by the formula
\begin{equation*}
c_t=\sum_{s=0}^{t}a_sb_{t-s},\quad t\in \mathbb{N}\cup \{0\}.
\end{equation*}
\end{definition}

As an inverse data for (\ref{Jacobi_dyn}) we use the analog of the
dynamical response operator (dynamical Dirichlet-to-Neumann map)
\cite{B07}.
\begin{definition}
For (\ref{Jacobi_dyn}) the \emph{response operator}
$R^T:\mathcal{F}^T\mapsto \mathbb{R}^T$ is defined by the rule
\begin{equation*}
\left(R^Tf\right)_t=u^f_{1,t}, \quad t=1,\ldots,T.
\end{equation*}
\end{definition}
Introduce the notation: the \emph{response vector} is the
convolution kernel of the response operator,
$r=(r_0,r_1,\ldots,r_{T-1})=(a_0,w_{1,1},w_{1,2},\ldots
w_{1,T-1})$. Then in accordance with (\ref{Jac_sol_rep})
\begin{eqnarray}
\label{R_def}
\left(R^Tf\right)_t=u^f_{1,t}=a_0f_{t-1}+\sum_{s=1}^{t-1}
w_{1,s}f_{t-1-s}
\quad t=1,\ldots,T.\\
\notag \left(R^Tf\right)=r*f_{\cdot-1}.
\end{eqnarray}
 If we take special control
 $f=\delta=(1,0,0,\ldots)$, then the kernel of response operator becomes
\begin{equation}
    \label{con11} \left(R^T\delta\right)_t=u^\delta_{1,t}= r_{t-1}.
\end{equation}

%For the system (\ref{Jacobi_dyn_int}) we introduce the response
%operator by
%\begin{definition}
%For the system in (\ref{Jacobi_dyn_int}) the \emph{response
%operator} $R^T_i:\mathcal{F}^T\mapsto \mathbb{R}^T$ is defined by
%the rule
%\begin{equation}
%\label{R_def_int} \left(R^T_if\right)_t=v^f_{1,t}, \quad
%t=1,\ldots,T.
%\end{equation}
%\end{definition}
%The corresponding \emph{response vector} we denote by
%$(r^i_1,r^i_2,\ldots)$. More information on this operator and on
%the inverse spectral problem one can find in the last section.

We introduce the \emph{inner space} of dynamical system
(\ref{Jacobi_dyn}) $\mathcal{H}^T:=\mathbb{R}^T$, $h\in
\mathcal{H}^T$, $h=(h_1,\ldots, h_T)$. The \emph{control operator}
$W^T:\mathcal{F}^T\mapsto \mathcal{H}^T$ is defined by the rule
\begin{equation*}
W^Tf:=u^f_{n,T},\quad n=1,\ldots,T.
\end{equation*}
Directly from (\ref{Jac_sol_rep}) we deduce that
\begin{equation}
\label{W^T_rep} \left(W^Tf\right)_n=u^f_{n,T}=\prod_{k=0}^{n-1}
a_kf_{T-n}+\sum_{s=n}^{T-1}w_{n,s}f_{T-s-1},\quad n=1,\ldots,T.
\end{equation}
The following statement is equivalent to the controllability of
(\ref{Jacobi_dyn}).
\begin{theorem}
\label{teor_control} The operator $W^T$ is an isomorphism between
$\mathcal{F}^T$ and $\mathcal{H}^T$.
\end{theorem}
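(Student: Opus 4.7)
The plan is to read off the matrix of $W^T$ directly from the representation formula (\ref{W^T_rep}) and to observe that, after a trivial reordering of the control coordinates, the matrix is triangular with a strictly positive diagonal. Since $\mathcal{F}^T=\mathcal{H}^T=\mathbb{R}^T$ as vector spaces, the claim reduces to invertibility of the linear map $W^T$.

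First I would introduce the order-reversing isomorphism $J:\mathcal{F}^T\to\mathcal{F}^T$ defined by $(Jg)_k:=g_{T-1-k}$ and show invertibility of $W^TJ$ instead. Applying the substitutions $f_{T-n}=g_{n-1}$ and $f_{T-s-1}=g_s$ in (\ref{W^T_rep}) gives
\begin{equation*}
(W^TJg)_n \;=\; \Big(\prod_{k=0}^{n-1}a_k\Big)\,g_{n-1}\;+\;\sum_{s=n}^{T-1}w_{n,s}\,g_s,\qquad n=1,\ldots,T.
\end{equation*}

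Viewed as a $T\times T$ matrix with row index $n\in\{1,\ldots,T\}$ and column index $j\in\{0,\ldots,T-1\}$, this expression is upper triangular: in row $n$, the coefficient of $g_j$ vanishes when $j<n-1$, equals $\prod_{k=0}^{n-1}a_k$ when $j=n-1$, and equals $w_{n,j}$ when $j\geq n$. Since the sequence $\{a_k\}$ is strictly positive by hypothesis, every diagonal entry $\prod_{k=0}^{n-1}a_k$ is strictly positive, so $\det(W^TJ)=\prod_{n=1}^{T}\prod_{k=0}^{n-1}a_k>0$, and $W^T$ is an isomorphism.

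There is essentially no obstacle: the content of the theorem is already packaged in the representation formula (\ref{W^T_rep}) supplied by Lemma~1. The only things to notice are the triangular shape (after reversing the control) and the sign of the diagonal; the Goursat coefficients $w_{n,s}$ that appear off the diagonal are irrelevant to the invertibility argument, and no further spectral information about $H$ is needed.
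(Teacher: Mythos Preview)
Your proof is correct and follows essentially the same approach as the paper: both introduce the order-reversing map $J$ on $\mathcal{F}^T$, observe that $W^TJ$ (equivalently, the paper's $(A+K)$) is upper triangular with strictly positive diagonal entries $\prod_{k=0}^{n-1}a_k$, and conclude invertibility. Your version is slightly more explicit in computing the determinant, whereas the paper simply writes $W^T=(A+K)J^T$ and declares the result obvious.
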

\begin{proof}
We fix some $a\in \mathcal{H}^T$ and look for a control $f\in
\mathcal{F}^T$ such that $W^Tf=a$. To this aim we write down the
operator as
\begin{equation}
\label{WT}
W^Tf=\begin{pmatrix} u_{1,T}\\
u_{2,T}\\
\cdot\\
u_{k,T}\\
\cdot\\
u_{T,T}
\end{pmatrix}=\begin{pmatrix}
a_0& w_{1,1} & w_{1,2} & \ldots & \ldots & w_{1,T-1}\\
0 & a_0a_1 & w_{2,2} & \ldots & \ldots & w_{2, T-1}\\
\cdot & \cdot & \cdot & \cdot & \cdot & \cdot \\
0 & \ldots & \prod_{j=0}^{k-1}
a_j& w_{k,k} & \ldots & w_{k, T-1}\\
\cdot & \cdot & \cdot & \cdot & \cdot & \cdot \\
0 & 0 & 0 & 0 & \ldots & \prod_{k=0}^{T-1} a_{T-1}
\end{pmatrix}
\begin{pmatrix} f_{T-1}\\
f_{T-2}\\
\cdot\\
f_{T-k-1}\\
\cdot\\
f_{0}
\end{pmatrix}
\end{equation}
We introduce the notations
\begin{eqnarray*}
J_T: \mathcal{F}^T\mapsto \mathcal{F}^T,\quad
\left(J_Tf\right)_n=f_{T-1-n},\quad n=0,\ldots, T-1,\\
A\in \mathbb{R}^{T\times T},\quad
a_{ii}=\prod_{k=0}^{i-1}a_i,\quad a_{ij}=0,\, i\not=j,\\
\\ K\in \mathbb{R}^{T\times T},\quad k_{ij}=0,\, i\geqslant j,
\, k_{ij}=w_{ij-1},\,i<j.
\end{eqnarray*}
Then
\begin{equation}
\label{WT1}
W^T=\left(A+K\right)J^T
\end{equation}
Obviously, this operator is invertible, which proves the statement
of the theorem.
\end{proof}

%For the system (\ref{Jacobi_dyn_int}) the \emph{control operator}
%$W^T_i:\mathcal{F}^T\mapsto \mathcal{H}^N$ is defined by the rule
%\begin{equation*}
%W^T_if:=v^f_{n,T},\quad n=1,\ldots,N.
%\end{equation*}
%he representation for this operator immediately follows from
%(\ref{Jac_sol_rep_int}), (\ref{c_koeff_repr}).

For the system (\ref{Jacobi_dyn}) we introduce the
\emph{connecting operator} $C^T: \mathcal{F}^T\mapsto
\mathcal{F}^T$ by the quadratic form: for arbitrary $f,g\in
\mathcal{F}^T$ we define
\begin{equation}
\label{C_T_def} \left(C^T
f,g\right)_{\mathcal{F}^T}=\left(u^f_{\cdot,T},
u^g_{\cdot,T}\right)_{\mathcal{H}^T}=\left(W^Tf,W^Tg\right)_{\mathcal{H}^T}.
\end{equation}
We observe that $C^T=\left(W^T\right)^*W^T$, so due to Theorem
\ref{teor_control}, $C^T$ is an isomorphism in $\mathcal{F}^T$.
The fact that $C^T$ can be expressed in terms of response $R^{2T-1}$
is crucial in BC-method.
\begin{theorem}
Connecting operator admits the representation in terms of inverse
data:
\begin{equation}
\label{C_T_repr} C^T=a_0C^T_{ij},\quad
C^T_{ij}=\sum_{k=0}^{T-\max{i,j}}r_{|i-j|+2k},\quad r_0=a_0.
\end{equation}
\begin{equation*}
C^T=
\begin{pmatrix}
r_0+r_2+\ldots+r_{2T-2} & r_1+r_3+\ldots+r_{2T-3} & \ldots &
r_T+r_{T-2} &
r_{T-1}\\
r_1+r_3+\ldots+r_{2T-3} & r_0+r_2+\ldots+r_{2T-4} & \ldots &
\ldots
&r_{T-2}\\
\cdot & \cdot & \cdot & \cdot & \cdot \\
r_{T-3}+r_{T-1}+r_{T+1} &\ldots & r_0+r_2+r_4 & r_1+r_3 & r_2\\
r_{T}+r_{T-2}&\ldots &r_1+r_3&r_0+r_2&r_1 \\
r_{T-1}& r_{T-2}& \ldots & r_1 &r_0
\end{pmatrix}
\end{equation*}
\end{theorem}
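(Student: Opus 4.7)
The plan is the standard Blagoveshchensky-type argument from the Boundary Control method. Define the bilinear quantity
\begin{equation*}
\Psi^{f,g}(t,s):=\sum_{n\geqslant 1}u^f_{n,t}\,u^g_{n,s},
\end{equation*}
so that directly from (\ref{C_T_def}) one has $(C^Tf,g)_{\mathcal{F}^T}=\Psi^{f,g}(T,T)$. The strategy is to derive a discrete wave equation for $\Psi^{f,g}$, solve it by a d'Alembert-type sum, and finally substitute the convolution representation $(R^Th)_\sigma=\sum_{m=0}^{\sigma-1}r_m h_{\sigma-1-m}$ from (\ref{R_def}) to collect the coefficient of $f_{i-1}g_{j-1}$.

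The first step is to verify, by summation-by-parts in $n$ using equation (\ref{Jacobi_dyn}) for both $u^f$ and $u^g$, the identity
\begin{equation*}
\Psi(t+1,s)+\Psi(t-1,s)-\Psi(t,s+1)-\Psi(t,s-1)=a_0\bigl[f_t(R^Tg)_s-(R^Tf)_t g_s\bigr]\quad(t,s\geqslant 1),
\end{equation*}
together with the vanishing conditions $\Psi(0,\cdot)=\Psi(-1,\cdot)=\Psi(\cdot,0)=\Psi(\cdot,-1)=0$ inherited from $u^f_{n,-1}=u^f_{n,0}=0$ (and similarly for $g$). The contributions containing $a_n$ and $b_n$ cancel by the symmetry of the Jacobi operator; only the $n=0$ boundary terms survive, and identifying $u^f_{0,t}=f_t$ and $u^f_{1,t}=(R^Tf)_t$ (and similarly for $g$) produces the stated right-hand side.

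The second step is to solve this discrete wave-type problem explicitly. Writing it as the forward recursion
\begin{equation*}
\Psi(t+1,s)=\Psi(t,s+1)+\Psi(t,s-1)-\Psi(t-1,s)+F(t,s),\qquad F(t,s):=a_0\bigl[f_t(R^Tg)_s-(R^Tf)_t g_s\bigr],
\end{equation*}
and iterating from $\Psi(0,\cdot)=\Psi(-1,\cdot)=0$, one proves by induction on $t$ that
\begin{equation*}
\Psi(t,s)=\sum_{\tau=0}^{t-1}\sum_{k=0}^{t-1-\tau}F\bigl(\tau,\,s-(t-1-\tau)+2k\bigr),
\end{equation*}
i.e. $\Psi$ equals the sum of the source over the discrete backward light cone. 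The inductive step is a direct calculation: the two forward-shift contributions $\Psi(t,s\pm 1)$ double the interior of the light cone, and $-\Psi(t-1,s)$ exactly cancels this doubling. Specialising at the diagonal,
\begin{equation*}
\Psi(T,T)=\sum_{\tau=0}^{T-1}\sum_{k=0}^{T-1-\tau}F(\tau,\tau+1+2k).
\end{equation*}

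Finally, substituting $(R^Th)_\sigma=\sum_m r_m h_{\sigma-1-m}$ into each $F(\tau,\tau+1+2k)$ and collecting the coefficient of $f_{i-1}g_{j-1}$ in $\Psi(T,T)$ produces the claimed entry $a_0 C^T_{ij}$. I expect this last combinatorial bookkeeping to be the main obstacle, and I would split into two cases. When $i\geqslant j$, only the summand $f_\tau(R^Tg)_\sigma$ contributes; taking $\tau=i-1$ and $\sigma=i+2k$ immediately gives the range $k=0,\ldots,T-i=T-\max(i,j)$ and the index $\sigma-j=|i-j|+2k$. When $i<j$, the summand $-(R^Tf)_\tau g_\sigma$ also contributes, and a direct reindexing shows that the two contributions combine so as to trim the lower limit of summation, again producing exactly $\sum_{k=0}^{T-\max(i,j)}r_{|i-j|+2k}$, which is (\ref{C_T_repr}).
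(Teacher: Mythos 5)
Your proposal is correct and follows essentially the same route as the paper: the Blagoveshchenskii function $\Psi^{f,g}$, the discrete wave equation with source $a_0\bigl[f_t(Rg)_s-(Rf)_tg_s\bigr]$ obtained by summation by parts, and the backward light-cone summation formula evaluated at $(T,T)$. The only (harmless) deviation is in the last bookkeeping step: the paper kills the term $-(Rf)_\tau g_\sigma$ by an odd extension of $f$ to $\mathcal{F}^{2T}$ and reads the kernel off $R^{2T}f$, whereas you collect the coefficient of $f_{i-1}g_{j-1}$ from both terms with the case split $i\geqslant j$, $i<j$ (where the second term trims the lower summation limit) — note only that the response values $(Rg)_\sigma$ with $\sigma$ up to $2T-1$ are used, so it is really $R^{2T}$ (i.e. $r_0,\ldots,r_{2T-2}$) that enters, as in the paper.
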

\begin{proof}
For fixed $f,g\in \mathcal{F}^T$ we introduce the
\emph{Blagoveshchenskii function} by the rule
\begin{equation*}
\psi_{n,t}:=\left(u^f_{\cdot,n},
u^g_{\cdot,t}\right)_{\mathcal{H}^T}=\sum_{k=1}^T
u^f_{k,n}u^g_{k,t}
\end{equation*}
Then we show that $\psi_{n,t}$ satisfies some difference equation.
Indeed, we can evaluate:
\begin{eqnarray*}
\psi_{n,t+1}+\psi_{n,t-1}-\psi_{n+1,t}-\psi_{n-1,t}=\sum_{k=1}^T
u^f_{k,n}\left(u^g_{k,t+1}+u^g_{k,t-1}\right)\\
-\sum_{k=1}^T \left(u^f_{k,n+1}+u^f_{k,n-1}\right)u^g_{k,t}=
\sum_{k=1}^T
u^f_{k,n}\left(a_ku^g_{k+1,t}+a_{k-1}u^g_{k-1,t}+b_ku^g_{k,t}\right)
\\
-\sum_{k=1}^T
\left(a_ku^f_{k+1,n}+a_{k-1}u^f_{k-1,n}+b_ku^f_{k,t}\right)u^g_{k,t}=\sum_{k=1}^T
u^g_{k,t}
\left(a_ku^f_{k+1,n}+a_{k-1}u^f_{k-1,n}\right)\\
+a_0u^g_{0,t}u^f_{1,n}-a_0u^f_{0,n}u^g_{1,t}
+a_Tu^g_{T+1,t}u^f_{T,n}-a_Tu^f_{T+1,n}u^g_{T,t}\\
-\sum_{k=1}^T u^g_{k,t}
\left(a_ku^f_{k+1,n}+a_{k-1}u^f_{k-1,n}\right)=a_0\left[g_t(Rf)_n-f_n(Rg)_t\right]
\end{eqnarray*}
So we arrive at the following difference equation on $\psi_{n,t}$:
\begin{eqnarray}
\label{Blag_eqn}
&\left\{
\begin{array}l
\psi_{n,t+1}+\psi_{n,t-1}-\psi_{n+1,t}-\psi_{n-1,t}=h_{n,t},\quad n,t\in \mathbb{N}_0,\\
\psi_{0,t}=0,\,\, \psi_{n,0}=0,
\end{array}
\right. \\
&h_{n,t}=a_0\left[g_t(Rf)_n-f_n(Rg)_t\right]\notag
\end{eqnarray}
We introduce the set
\begin{eqnarray*}
K(n,t):=\left\{(n,t)\cup \{(n-1,t-1),
(n+1,t-1)\}\cup\{(n-2,t-2),(n,t-2),\right.\\
\left.(n+2,t-2)\}\cup\ldots\cup\{(n-t,0),(n-t+2,0),\ldots,(n+t-2,0),(n+t,0)\}\right\}\\
=\bigcup_{\tau=0}^t\bigcup_{k=0}^\tau
\left(n-\tau+2k,t-\tau\right).
\end{eqnarray*}
The solution to (\ref{Blag_eqn}) is given by (see \cite{MM})
\begin{equation*}
%\label{Blag_sol}
\psi_{n,t}=\sum_{k,\tau\in K(n,t-1)}h(k,\tau).
\end{equation*}
We observe that $\psi_{T,T}=\left(C^Tf,g\right)$, so
\begin{equation}
\label{C_T_sol}\left(C^Tf,g\right)=\sum_{k,\tau\in
K(T,T-1)}h(k,\tau).
\end{equation}
Notice that in the r.h.s. of (\ref{C_T_sol}) the argument $k$ runs
from $1$ to $2T-1.$ We extend $f\in \mathcal{F}^T$,
$f=(f_0,\ldots,f_{T-1})$ to $f\in \mathcal{F}^{2T}$ by:
\begin{equation*}
f_T=0,\quad f_{T+k}=-f_{T-k},\,\, k=1,2,\ldots, T-1.
\end{equation*}
Due to this odd extension, $\sum_{k,\tau\in K(T,T-1)}
f_k(R^Tg)_\tau=0$, so (\ref{C_T_sol}) gives
\begin{eqnarray*}
\left(C^Tf,g\right)=\sum_{k,\tau\in K(T,T-1)}
g_\tau\left(R^{2T}f\right)_k=g_0\left[\left(R^{2T}f\right)_1+\left(R^{2T}f\right)_3+\ldots+\left(R^{2T}f\right)_{2T-1}\right]\\
+g_1\left[\left(R^{2T}f\right)_2+\left(R^{2T}f\right)_4+\ldots+\left(R^{2T}f\right)_{2T-2}\right]+\ldots+g_{T-1}\left(R^{2T}f\right)_{T}.
\end{eqnarray*}
Finally we infer that
\begin{equation*}
C^Tf=\left(\left(R^{2T}f\right)_1+\ldots+\left(R^{2T}f\right)_{2T-1},\left(R^{2T}f\right)_2+\ldots+\left(R^{2T}f\right)_{2T-2},\ldots,\left(R^{2T}f\right)_{T}
\right)
\end{equation*}
from where the statement of the theorem follows.
\end{proof}
%One can observe that $C^T_{ij}$ satisfies the difference boundary
%problem.
%\begin{corollary}
%The kernel of $C^T$ satisfy
%\begin{equation*}
%\left\{
%\begin{array}l
%C^T_{i,j+1}+C^T_{i,j-1}-C^T_{i+1,j}-C^T_{i-1,j}=0,\\
%C^T_{i,T}=r_{T-i},\,\,C^T_{T,j}=r_{T-j},\,\, r_0=1.
%\end{array}
%\right.
%\end{equation*}
%\end{corollary}

%For the system (\ref{Jacobi_dyn_int}) the \emph{connecting
%operator} $C^T_i: \mathcal{F}^T\mapsto \mathcal{F}^T$ is
%introduced in the similar way: for arbitrary $f,g\in
%\mathcal{F}^T$ we define
%\begin{equation}
%\label{C_T_def_int} \left(C^T_i
%f,g\right)_{\mathcal{F}^T}=\left(v^f_{\cdot,T},
%v^g_{\cdot,T}\right)_{\mathcal{H}^N}=\left(W^T_if,W^T_ig\right)_{\mathcal{H}^N}.
%\end{equation}
%More information on $C^T_i$ one can find in the final section.

\section{Inverse problem. }

The dependence of the solution (\ref{Jacobi_dyn}) $u^f$ on the
coefficients $a_n,b_n$ resemble one of the wave equation with the
potential. From the very system one can see that for $M\in
\mathbb{N}$, $u^f_{M,M+1}$ depends on $\{a_0,\ldots,a_{M-1}\}$,
$\{b_1,\ldots,b_M\}$, which implies that $u^f_{1,2M}$ depends of
the same set of parameters. From where follows
\begin{remark}
\label{Rem1} The response $R^{2T}$ (or, what is equivalent, the
response vector $(r_0,r_1,\ldots,r_{2T-1})$) depends on
$\{a_0,\ldots,a_{T-1}\}$, $\{b_1,\ldots,b_T\}$.
\end{remark}
This is an analog of the effect of the finite speed of wave
propagation in the wave equation. This leads to the following
natural set up of the dynamical inverse problem: by the given
operator $R^{2T}$ to recover $\{a_0,\ldots,a_{T-1}\}$ and
$\{b_1,\ldots,b_T\}$.

\subsection{Krein equations}
Let $\alpha,\beta\in \mathbb{R}$ and $y$ be solution to
\begin{equation}
\label{y_special} \left\{
\begin{array}l
a_ky_{k+1}+a_{k-1}y_{k-1}+b_ky_k=0,\\
y_0=\alpha,\,\, y_1=\beta.
\end{array}
\right.
\end{equation}
We set up the following control problem: to find a control $f^T\in
\mathcal{F}^T$ such that
\begin{equation}
\label{Control_probl}
\left(W^Tf^T\right)_k=y_k,\quad
k=1,\ldots,T.
\end{equation}
Due to Theorem \ref{teor_control}, this problem has unique
solution. Let $\varkappa^T$ be a solution to
\begin{equation}
\label{kappa} \left\{
\begin{array}l
\varkappa^T_{t+1}+\varkappa^T_{t-1}=0,\quad t=0,\ldots,T,\\
\varkappa^T_{T}=0,\,\, \varkappa^T_{T-1}=1.
\end{array}
\right.
\end{equation}
We show that the control $f^T$ satisfies the Krein equation:
\begin{theorem}
The control $f^T$, defined by (\ref{Control_probl}) satisfies the
following equation in $\mathcal{F}^T$:
\begin{equation}
\label{C_T_Krein} C^Tf^T=a_0\left[\beta\varkappa^T-\alpha
\left(R^T\right)^*\varkappa^T\right].
\end{equation}
\end{theorem}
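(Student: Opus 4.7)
My plan is to test the desired identity against an arbitrary $g \in \mathcal{F}^T$ and exploit the quadratic-form description $C^T = (W^T)^* W^T$ from \eqref{C_T_def}. By the control condition \eqref{Control_probl},
\begin{equation*}
(C^T f^T, g)_{\mathcal{F}^T} = (W^T f^T, W^T g)_{\mathcal{H}^T} = \sum_{k=1}^T y_k\, u^g_{k,T} =: \phi_T,
\end{equation*}
so the task reduces to computing $\phi_t := \sum_{k=1}^T y_k u^g_{k,t}$ at $t=T$ in terms of $g$, $R^T g$, and $\varkappa^T$.

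The first step is to derive a discrete wave equation for $\phi_t$ whose source is built out of $g$ and $R^T g$. I would evaluate $\phi_{t+1}+\phi_{t-1}$ using the evolution equation of \eqref{Jacobi_dyn} for $u^g$, and then eliminate $b_k y_k$ via $a_k y_{k+1}+a_{k-1}y_{k-1}+b_k y_k = 0$ from \eqref{y_special}. The resulting interior terms pair off telescopically (a discrete Lagrange/Green identity of the form $A_k = -B_{k+1}$), leaving only the two boundary contributions: at $k=1$, $a_0\bigl(y_1 u^g_{0,t} - y_0 u^g_{1,t}\bigr) = a_0\bigl[\beta g_t - \alpha (R^T g)_t\bigr]$; at $k=T$, $a_T\bigl(y_T u^g_{T+1,t} - y_{T+1} u^g_{T,t}\bigr)$, which vanishes whenever $t \le T-1$ by the finite-speed-of-propagation bound $u^g_{n,t}=0$ for $t<n$ read directly off \eqref{Jac_sol_rep}. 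Together with $\phi_{-1}=\phi_0=0$ from the initial data of $u^g$, this yields
\begin{equation*}
\phi_{t+1} + \phi_{t-1} = a_0\bigl[\beta g_t - \alpha (R^T g)_t\bigr], \qquad t=0,1,\ldots,T-1.
\end{equation*}

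The second step is to pair this recursion with $\varkappa^T$. Multiplying by $\varkappa^T_t$, summing over $t=0,\ldots,T-1$, and applying an Abel-type reindexing on the two shifts of $\phi$, the left-hand side rearranges into
\begin{equation*}
\varkappa^T_{T-1}\phi_T + \varkappa^T_{T-2}\phi_{T-1} + \varkappa^T_1 \phi_0 + \sum_{s=1}^{T-2}\bigl(\varkappa^T_{s-1}+\varkappa^T_{s+1}\bigr)\phi_s.
\end{equation*}
The interior sum vanishes by the recurrence in \eqref{kappa}; the same recurrence together with the terminal data $\varkappa^T_T=0$, $\varkappa^T_{T-1}=1$ forces $\varkappa^T_{T-2}=0$; and $\phi_0=0$. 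Hence the left side collapses to $\phi_T$, and equating with $\sum_{t=0}^{T-1} \varkappa^T_t \cdot a_0[\beta g_t - \alpha (R^T g)_t] = \bigl(a_0[\beta \varkappa^T - \alpha (R^T)^*\varkappa^T],\, g\bigr)_{\mathcal{F}^T}$ produces \eqref{C_T_Krein} by arbitrariness of $g$.

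The main technical obstacle is the first step: the discrete Green identity must be executed carefully and one has to verify that the right-endpoint contribution is annihilated throughout the relevant range of $t$. This is precisely the cone-of-dependence property of $u^g$ for controls supported in $\mathcal{F}^T$, manifested here as $u^g_{T+1,t}=0$ for $t\le T$ and $u^g_{T,t}=0$ for $t\le T-1$; it is this discrete finite-speed effect that decouples the identity from the unknown tail $\{a_k,b_k\}_{k\ge T}$ and makes the Krein equation a closed relation between the observed inverse data and $f^T$.
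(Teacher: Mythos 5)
Your proof is correct and is essentially the paper's argument: you test against an arbitrary $g$, use $C^T=(W^T)^*W^T$ together with the control condition, a discrete Green (summation-by-parts) identity in $k$ against the equation for $y$, and Abel summation against $\varkappa^T$ in $t$; the only difference is the order of the two summations (the paper first establishes $u^g_{k,T}=\sum_t\bigl(u^g_{k,t+1}+u^g_{k,t-1}\bigr)\varkappa^T_t$ and then sums by parts in $k$, whereas you first derive the forced recursion for $\phi_t$ and then pair with $\varkappa^T$). Your explicit use of finite speed of propagation to annihilate the $k=T$ boundary terms $a_T\bigl(y_T u^g_{T+1,t}-y_{T+1}u^g_{T,t}\bigr)$ for $t\le T-1$ is a point the paper leaves implicit, and it is handled correctly.
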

\begin{proof}
Let us take $f^T$ solving (\ref{Control_probl}). We observe that
for any fixed $g\in \mathcal{F}^T$:
\begin{equation}
\label{Kr_1}
u^g_{k,T}=\sum_{t=1}^{T-1}\left(u^g_{k,t+1}+u^g_{k,t-1}\right)\varkappa^T_t.
\end{equation}
Indeed, changing the order of summation in the r.h.s. of
(\ref{Kr_1}), we get
\begin{equation*}
\sum_{t=1}^{T-1}\left(u^g_{k,t+1}+u^g_{k,t-1}\right)\varkappa^T_t=\sum_{t=1}^{T-1}\left(\varkappa^T_{t+1}+\varkappa^T_{t-1}
\right)u^g_{k,t}+u^g_{k,0}\varkappa^T_1-u^g_{k,T}\varkappa^T_{T-1}.
\end{equation*}
which gives (\ref{Kr_1}) due to (\ref{kappa}). Using this
observation, we can evaluate
\begin{eqnarray*}
\left(C^Tf^T,g\right)=\sum_{k=1}^T
y_ku^g_{k,T}=\sum_{k=1}^T\sum_{t=0}^{T-1}\left(u^g_{k,t+1}+u^g_{k,t-1}\right)\varkappa^T_t
y_k\\
=\sum_{t=0}^{T-1}\varkappa^T_t\left(\sum_{k=1}^T
\left(a_ku^g_{k+1,t}y_k+a_{k-1}u^g_{k-1,t}y_k + b_ku^g_{k,t}y_k\right)\right)\\
=\sum_{t=0}^{T-1}\varkappa^T_t\left(\sum_{k=1}^T
\left(u^g_{k,t}(a_ky_{k+1}+a_{k-1}y_{k-1}+b_ky_k\right)+a_0u^g_{0,t}y_1\right.\\
\left.+a_Tu^g_{T+1,t}y_T-a_0u^g_{1,t}y_0-a_Tu^g_{T,t}y_{T+1}
\right) =\sum_{t=0}^{T-1}\varkappa^T_t\left(a_0\beta
g_t-a_0\alpha\left(R^Tg\right)_t \right)\\
=\left(\varkappa^T,
a_0\left[\beta g-\alpha
\left(R^Tg\right)\right]\right)=\left(a_0\left[\beta\varkappa^T -
\alpha \left(\left(R^T\right)^*\varkappa^T\right)\right],
g\right).
\end{eqnarray*}
From where (\ref{C_T_Krein}) follows.
\end{proof}

Having found $f^\tau\in W^\tau$ for $\tau=1,\ldots,T$, we can
recover $b_n,$ $a_n$, $n=1,\ldots,T-1$. We will describe the
procedure. From (\ref{Jac_sol_rep}) and (\ref{Goursat}) we infer
that
\begin{eqnarray*}
u^{f^T}_{T,T}=\prod_{k=0}^{T-1}a_k f^T_0,\\
u^{f^T}_{T-1,T}=\prod_{k=0}^{T-2}a_k
f^T_1+\prod_{k=0}^{T-2}(b_1+b_2+\ldots+b_{T-1})
\end{eqnarray*}
Notice that we know $a_0=r_0$. Let $T=2$, then we have:
\begin{eqnarray}
y_2=u^{f^2}_{2,2}=a_0a_1f^2_0,\label{y_2}\\
y_1=u^{f^2}_{1,2}=a_0f^2_1+a_0b_1f^2_0, \label{y_1}
\end{eqnarray}
In (\ref{y_1}) we know $y_1=\beta,$ $a_0,$ $f^2_1,$ $f^2_0$, so we
can recover $b_1$. On the other hand, using (\ref{y_special}), we
have a system
\begin{equation*}
\left\{
\begin{array}l
y_2=a_0a_1f^2_0,\\
a_1y_2+a_0\alpha+b_1\beta=0
\end{array}
\right.
\end{equation*}
Since $a_1>0,$ we can recover $y_2$ and $a_1.$ Assume that we have
already found $y_{k-1},$ $b_{k-2},$ $a_{k-2}$ for $k\leqslant n$,
we will find $y_n,$ $a_{n-1},$ $b_{n-1}$. We have that
\begin{eqnarray}
y_n=u^{f^n}_{n,n}=\prod_{k=0}^{n-2}a_k a_{n-1}f^2_0,\label{y_n}\\
y_{n-1}=u^{f^n}_{n-1,n}=\prod_{k=0}^{n-2}a_kf^n_1+\prod_{k=0}^{n-2}a_k(b_1+\ldots+b_{n-2}+b_{n-1})f^n_0,
\label{y_n1}
\end{eqnarray}
Since we know $y_{n-1},$ $f^n_0$, $f^n_1$, and $a_k,$ $b_k$,
$k\leqslant n-2,$ from (\ref{y_n1}) we can recover $b_{n-1}$. Then
we use (\ref{y_special}) and (\ref{y_n}) to write down the system
\begin{equation*}
\left\{
\begin{array}l
y_n=\prod_{k=0}^{n-2}a_k a_{n-1}f^2_0,\\
a_{n-1}y_n+a_{n-2}y_{n-2}+b_{n-1}y_{n-1}=0.
\end{array}
\right.
\end{equation*}
From which we recover $a_{n-1}$ and $y_n.$

\subsection{Factorization method}

We make use the fact that matrix $C^T$ has a special structure --
it is a product of triangular matrix and its conjugate. We rewrite
the operator $W^T$ as $W^T=\overline W^TJ$ where
\begin{equation*}
W^Tf=\begin{pmatrix}
a_0 & w_{1,1} & w_{1,2} & \ldots & w_{1,T-1}\\
0 & a_0a_1 & w_{2,2} &  \ldots & w_{2, T-1}\\
\cdot & \cdot & \cdot & \cdot & \cdot \\
0 & \ldots & \prod_{j=1}^{k-1}a_j& \ldots & w_{k, T-1}\\
\cdot & \cdot & \cdot  & \cdot & \cdot \\
0 & 0 & 0  & \ldots & \prod_{j=1}^{T-1}a_j
\end{pmatrix}
\begin{pmatrix}
0 & 0 & 0 & \ldots & 1\\
0 & 0 & 0 & \ldots  & 0\\
\cdot & \cdot & \cdot & \cdot &  \cdot \\
0 & \ldots & 1& 0  & 0\\
\cdot & \cdot & \cdot & \cdot &  \cdot \\
1 & 0 & 0 & 0 &  0
\end{pmatrix}
\begin{pmatrix} f_{0}\\
f_{2}\\
\cdot\\
f_{T-k-1}\\
\cdot\\
f_{T-1}
\end{pmatrix}
\end{equation*}
Using the definition (\ref{C_T_def}) and the invertibility of
$W^T$ (cf. Theorem \ref{teor_control}), we have:
\begin{equation*}
C^T=\left(W^T\right)^*W^T,\quad \text{or} \quad
\left(\left(W^T\right)^{-1}\right)^*C^T\left(W^T\right)^{-1}=I.
\end{equation*}
We can rewrite the latter equation as
\begin{equation}
\label{C_T_eqn_ker} \left(\left(\overline
W^T\right)^{-1}\right)^*\overline C^T\left(\overline
W^T\right)^{-1}=I,\quad \overline C^T=JC^TJ.
\end{equation}
Here the matrix $\overline C^T$ has the entries:
\begin{equation}
\label{C_overline_repr} \overline C_{ij}=C_{T+1-j,T+1-i},\quad
\overline C^T=a_0
\begin{pmatrix}
r_0 & r_1 & r_2 & \ldots & r_{T-1}\\
r_1 & r_0+r_2 & r_1+r_3 & \ldots
&..\\
r_2 & r_1+r_3 & r_0+r_2+r_4 & \ldots & ..\\
\cdot & \cdot & \cdot & \cdot & \cdot \\
\end{pmatrix},
\end{equation}
and operator $\left(\overline W^T\right)^{-1}$ has the form
\begin{equation}
\label{W_T_bar} \left(\overline W^T\right)^{-1}=\begin{pmatrix}
a_{1,1} & a_{1,2} & a_{1,3}& \ldots & a_{1,T} \\
0 & a_{2,2} & a_{2,3} &\ldots &..\\
\cdot & \cdot & \cdot & a_{T-1,T-1} & a_{T-1,T} \\
0 &\ldots &\ldots & 0 & a_{T,T}
\end{pmatrix},
\end{equation}
We multiply the $k-$th row of $\overline W^T$ by $k-$th column of
$\left(\overline W^T\right)^{-1}$ to get $a_{k,k}a_0a_1\ldots
a_{k-1}=1$, so
\begin{equation}
\label{AK_form} a_{k,k}=\left(\prod_{j=0}^{k-1}a_j\right)^{-1}.
\end{equation}
Multiplying the $k-$th row of $\overline W^T$ by $k+1-$th column
of $\left(\overline W^T\right)^{-1}$, we get
\begin{equation*}
a_{k,k+1}a_0a_1\ldots a_{k-1}+a_{k+1,k+1}w_{k,k}=0,
\end{equation*}
from where
\begin{equation}
\label{AK1_form}
a_{k,k+1}=-\left(\prod_{j=0}^{k}a_j\right)^{-2}a_k w_{k,k},
\end{equation}

Thus we can rewrite (\ref{C_T_eqn_ker}) as
\begin{equation}
\label{matr_eq}
\begin{pmatrix}
a_{1,1} & 0 & . &  0 \\
a_{1,2} & a_{2,2} & 0  &.\\
\cdot & \cdot & \cdot & \cdot  \\
a_{1,T} & . & .  & a_{T,T}
\end{pmatrix}
\begin{pmatrix}
\overline c_{11} & .. & .. &  \overline c_{1T} \\
.. & .. & ..  &..\\
\cdot & \cdot & \cdot & \cdot  \\
\overline c_{T1} &.. &   & \overline c_{TT}
\end{pmatrix}\begin{pmatrix}
a_{1,1} & a_{1,2} & ..& a_{1,T} \\
0 & a_{2,2} & ..  & a_{2,T}\\
\cdot & \cdot & \cdot & \cdot  \\
0 &\ldots &\ldots  & a_{T,T}
\end{pmatrix}=I
%\begin{pmatrix}
%1 & 0 & .. &  0 \\
%0 & 1 & ..  &0\\
%\cdot & \cdot & \cdot & \cdot  \\
%0 &0 &  . & 1
%\end{pmatrix}
\end{equation}
In the above equation $\overline c_{ij}$ are given (see
(\ref{C_overline_repr})), the entries $a_{ij}$ are unknown. As a
direct consequence of (\ref{matr_eq}) we get
\begin{equation*}
\det{A^T}\det{\overline C^T}\det{A^T}=1,
\end{equation*}
which yields
\begin{equation*}
a_{1,1}*\ldots*a_{T,T}=\left(\det{\overline
C^T}\right)^{-\frac{1}{2}}.
\end{equation*}
From where we derive that
\begin{equation}
a_{1,1}=\left(\det{\overline C^1}\right)^{-\frac{1}{2}},\quad
a_{2,2}=\left(\frac{\det{\overline C^2}}{\det{\overline
C^1}}\right)^{-\frac{1}{2}},\quad
a_{k,k}=\left(\frac{\det{\overline C^k}}{\det{\overline
C^{k-1}}}\right)^{-\frac{1}{2}}.
\end{equation}
Combining the latter equation with (\ref{AK_form}), we deduce
\begin{equation}
a_0*\ldots*a_{k-1}=\left(\frac{\det{\overline C^k}}{\det{\overline
C^{k-1}}}\right)^{\frac{1}{2}},
\end{equation}
similarly,
\begin{equation}
a_0*\ldots*a_{k}=\left(\frac{\det{\overline
C^{k+1}}}{\det{\overline C^{k}}}\right)^{\frac{1}{2}},
\end{equation}
so we can write
\begin{equation}
\label{AK} a_k=\frac{\left(\det{\overline
C^{k+1}}\right)^{\frac{1}{2}}\left(\det{\overline
C^{k-1}}\right)^{\frac{1}{2}}}{\det{\overline C^{k}}}
\end{equation}
here we assume that $\det{C^0}=1,$ $\det{C^{-1}=1}$.

Now using (\ref{matr_eq}) we can write down the equation on the
last column of $\left(\overline W^T\right)^{-1}$:
\begin{equation}
\begin{pmatrix}
a_{1,1} & 0 & . &  0 \\
a_{1,2} & a_{2,2} & 0  &.\\
\cdot & \cdot & \cdot & \cdot  \\
a_{1,T-1} & . & .  & a_{T-1,T-1}
\end{pmatrix}
\begin{pmatrix}
\overline c_{1,1} & .. & .. &  \overline c_{1,T} \\
.. & .. & ..  &..\\
\cdot & \cdot & \cdot & \cdot  \\
\overline c_{T-1,1} &.. &   & \overline c_{T-1,T-1}
\end{pmatrix}\begin{pmatrix}
a_{1,T} \\
a_{2,T}\\
\cdot  \\
a_{T,T}
\end{pmatrix}=\begin{pmatrix}
0 \\
0\\
\cdot  \\
0
\end{pmatrix}
\end{equation}
Here we know $a_{T,T}$, so $(a_{1,T},\ldots,a_{T-1,T})^*$
satisfies
\begin{eqnarray*}
\begin{pmatrix}
a_{1,1} & 0 & . &  0 \\
a_{1,2} & a_{2,2} & 0  &.\\
\cdot & \cdot & \cdot & \cdot  \\
a_{1,T-1} & . & .  & a_{T-1,T-1}
\end{pmatrix}
\begin{pmatrix}
\overline c_{1,1} & .. & .. &  \overline c_{1,T} \\
.. & .. & ..  &..\\
\cdot & \cdot & \cdot & \cdot  \\
\overline c_{T-1,1} &.. &   & \overline c_{T-1,T-1}
\end{pmatrix}\begin{pmatrix}
a_{1,T} \\
a_{2,T}\\
\cdot  \\
a_{T-1,T}
\end{pmatrix}\\
+a_{T,T}\begin{pmatrix}
a_{1,1} & 0 & . &  0 \\
a_{1,2} & a_{2,2} & 0  &.\\
\cdot & \cdot & \cdot & \cdot  \\
a_{1,T-1} & . & .  & a_{T-1,T-1}
\end{pmatrix}\begin{pmatrix}
a_{1,T} \\
a_{2,T}\\
\cdot  \\
a_{T-1,T}
\end{pmatrix}=\begin{pmatrix}
0 \\
0\\
\cdot  \\
0
\end{pmatrix}
\end{eqnarray*}
which is equivalent to
\begin{equation}
\label{matr_eq2}
\begin{pmatrix}
\overline c_{1,1} & .. & .. &  \overline c_{1,T} \\
.. & .. & ..  &..\\
\cdot & \cdot & \cdot & \cdot  \\
\overline c_{T-1,1} &.. &   & \overline c_{T-1,T-1}
\end{pmatrix}\begin{pmatrix}
a_{1,T} \\
a_{2,T}\\
\cdot  \\
a_{T-1,T}
\end{pmatrix}=-a_{T,T}\begin{pmatrix}
a_{1,T} \\
a_{2,T}\\
\cdot  \\
a_{T-1,T}
\end{pmatrix}
\end{equation}
Introduce the notation:
\begin{equation}
\overline C^{k-1}_k:=\begin{pmatrix}
\overline c_{1,1} & .. & .. &  \overline c_{1,k-2} &  \overline c_{1,k}\\
.. & .. & ..  &..\\
\cdot & \cdot & \cdot & \cdot  \\
\overline c_{k-1,1} &.. &   & \overline c_{k-1,k-2}& \overline
c_{k-1,k}
\end{pmatrix},
\end{equation}
that is $\overline C^{k-1}_k$ is constructed from $C^{k-1}$ by
substituting the last column by $(\overline
c_{1,k},\ldots,\overline c_{k-1,k})$. Then by linear algebra, from
(\ref{matr_eq2}) we have:
\begin{equation}
\label{a1} a_{T-1,T}=-a_{T,T}\frac{\det{\overline
C^{T-1}_T}}{\det{\overline C^{T-1}}},
\end{equation}
here we assume that $\det{C^{-1}_0}=0.$ On the other hand, from
(\ref{AK_form}), (\ref{AK1_form}) we see that
\begin{equation}
\label{a2}
a_{T-1,T}=\left(\prod_{j=0}^{T-1}a_j\right)^{-1}\sum_{k=1}^{T-1}b_k
\end{equation}
Equating (\ref{a1}) and (\ref{a2}), we see that
\begin{equation}
\sum_{k=1}^{T-1}b_k=-\frac{\det{\overline
C^{T-1}_T}}{\det{\overline C^{T-1}}},\quad
\sum_{k=1}^{T}b_k=-\frac{\det{\overline
C^{T}_{T+1}}}{\det{\overline C^{T}}},
\end{equation}
from where
\begin{equation}
\label{BK} b_k=-\frac{\det{\overline C^{k}_{k+1}}}{\det{\overline
C^{k}}}+\frac{\det{\overline C^{k-1}_k}}{\det{\overline C^{k-1}}}
\end{equation}

\subsection{Characterization of the inverse data.}

In the second section we considered the forward problem
(\ref{Jacobi_dyn}), for $(a_0,\dots, a_{T-1})$, $(b_1,\dots,
b_{T-1})$ we constructed the matrix $W^T$ (\ref{Jac_sol_rep}),
(\ref{Goursat}), the response vector $(r_0,r_1,\dots,r_{2T-2})$
(see (\ref{R_def})) and the connecting operator  $\overline C^T$
defined in (\ref{C_T_repr}), (\ref{C_overline_repr}). From the
theorem \ref{teor_control} we know that $C^T$ is positively
definite. We have also shown that if coefficients $r_0,
r_1,\dots,r_{2T-2}$ correspond to $(a_0,\dots,a_{T-1})$
$(b_1,\dots,b_{T-1})$ then we can recover those $a'$s and $b's$ by
(\ref{AK}) and (\ref{BK}).

Now we set up a question: can one determine whether a vector
$(r_0,r_1,r_2,\ldots,r_{2T-2})$ is a response vector for the
dynamical system (\ref{Jacobi_dyn}) with some
$(a_0,\ldots,a_{T-1})$ $(b_1,\ldots,b_{T-1})$ or not? The answer
is the following theorem.
\begin{theorem}
\label{Th_char} The  vector $(r_0,r_1,r_2,\ldots,r_{2T-2})$ is a
response vector for the dynamical system (\ref{Jacobi_dyn}) if and
only if the matrix $C^T$ (\ref{C_T_repr}) is positively definite.
\end{theorem}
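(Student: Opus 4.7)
The plan is as follows. The only-if direction is immediate from Theorem~\ref{teor_control}: since $W^T$ is an isomorphism, $C^T=(W^T)^*W^T$ is automatically positive definite. For the if direction, my idea is to use the reconstruction formulas (\ref{AK}) and (\ref{BK}) from the Factorization Method subsection as a recipe for building Jacobi coefficients directly from $r$. Given $(r_0,\dots,r_{2T-2})$ with $\overline{C}^T$ positive definite, define candidates $a_0,\dots,a_{T-1}$ and $b_1,\dots,b_{T-1}$ by those formulas; positivity of every leading principal minor $\det\overline{C}^k$ (forced by positive definiteness of $\overline{C}^T$) makes the square roots real and the denominators nonzero, and (\ref{AK}) then yields $a_k>0$ and $b_k\in\mathbb{R}$.

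The real work is to verify that the Jacobi system built from these coefficients has response vector exactly $(r_0,\dots,r_{2T-2})$. I would argue by induction on $T$. The base case $T=1$ is trivial ($a_0=r_0$). For the step, observe from (\ref{C_overline_repr}) that $\overline{C}^{T-1}$ is the upper-left $(T-1)\times(T-1)$ principal submatrix of $\overline{C}^T$, hence still positive definite. By the inductive hypothesis there exists a Jacobi system with coefficients $(a_0,\dots,a_{T-2},b_1,\dots,b_{T-2})$ --- precisely those returned by the formulas applied to $(r_0,\dots,r_{2T-4})$ --- whose response is $(r_0,\dots,r_{2T-4})$. Appending $a_{T-1},b_{T-1}$ from the formulas applied to the full $r$ produces an extended system with some response $\tilde r$, and Remark~\ref{Rem1} (finite propagation) guarantees $\tilde r_k=r_k$ for $k\le 2T-4$ automatically.

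It remains to check $\tilde r_{2T-3}=r_{2T-3}$ and $\tilde r_{2T-2}=r_{2T-2}$. By the reconstruction procedure already established, the formulas (\ref{AK}), (\ref{BK}) applied to $\tilde r$ must return the same coefficients used to build the extended system; hence these formulas send both $r$ and $\tilde r$ to the same pair $(a_{T-1},b_{T-1})$. Since their first $2T-3$ entries already agree, it suffices to show that the partial map $(r_{2T-3},r_{2T-2})\mapsto(a_{T-1},b_{T-1})$ (with $r_0,\dots,r_{2T-4}$ held fixed) is injective. A cofactor expansion along the last row and column shows that $b_{T-1}$ is affine in $r_{2T-3}$ alone, with nonzero leading coefficient proportional to $a_0\det\overline{C}^{T-2}/\det\overline{C}^{T-1}$, while $a_{T-1}^2$ is affine in $r_{2T-2}$ with positive coefficient $a_0\det\overline{C}^{T-2}/\det\overline{C}^{T-1}$; this invertibility forces $\tilde r=r$. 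The main obstacle is precisely this last injectivity computation: one has to localize which entries of $\overline{C}^T$ and $\overline{C}^{T-1}_T$ depend on $r_{2T-3}$ and $r_{2T-2}$ and then propagate the resulting affine or quadratic dependence through the determinant expansions.
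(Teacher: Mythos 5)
Your proposal is correct and follows essentially the same route as the paper: both directions rest on $C^T=(W^T)^*W^T$ for necessity and, for sufficiency, on building the coefficients from $(r_0,\ldots,r_{2T-2})$ via (\ref{AK}), (\ref{BK}) and then showing the constructed system's response must coincide with $r$ because the determinants $\det\overline C^k$, $\det\overline C^k_{k+1}$ determine the response entries recursively through cofactor expansions with nonvanishing leading minors. Your induction on $T$ with the explicit injectivity computation for $(r_{2T-3},r_{2T-2})\mapsto(b_{T-1},a_{T-1})$ is just a spelled-out version of the step the paper dismisses as ``simple arguments,'' so no essential difference.
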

\begin{proof}
First we observe that in the conditions of the theorem we can
substitute $C^T$ by $\overline C^T$ (\ref{C_overline_repr}). The
necessary part of the theorem is proved in the preceding sections.
We are left to prove the sufficiency of these conditions.

Let we have a vector $(r_0,r_1,\dots,r_{2T-2})$ such that the
matrix $\overline C^{T}$ constructed from it using
(\ref{C_overline_repr}) satisfies conditions of the theorem. Then
we can construct sequences $(a_0,\dots,a_{T-1})$,
$(b_1,\dots,b_{T-1})$ using (\ref{AK}) and (\ref{BK}) and consider
the dynamical system (\ref{Jacobi_dyn}) with this coefficients.
For this system we construct the response
$(r^{new}_0,r^{new}_1,\dots,r^{new}_{2T-2})$ and connecting
operator $\mathcal{C}^{T}$ and its rotated $ \mathcal{\overline
C}^{T}$ using (\ref{C_T_repr}) and (\ref{C_overline_repr}). We
will show that the response vectors coincide.

First of all we note that we have two matrices constructed by
(\ref{C_overline_repr}), one comes from the vector
$(r_0,r_1,\dots,r_{2T-2})$ and the other comes from
$(r_0^{new},r^{new}_1,\dots,r^{new}_{2T-2})$. Also they have a
common property that $\overline C^T>0,$ $\mathcal{\overline
C}^T>0$ (one by theorem's condition and the other by
representation $\mathcal{\overline C}^T= (\overline W^T_{new})^*
\overline W^T_{new}$). Secondly we note that if we calculate the
elements of sequences  $(a_0,\dots,a_{T-1})$,
$(b_1,\dots,b_{T-1})$ using (\ref{AK}) and (\ref{BK}) from any of
$\overline C^{T}$ and $\mathcal{\overline C}^{T}$ matrices, we get
the same answer. If so, we get that
\begin{eqnarray*}
\frac{\det{\overline C^{k-1}_k}}{\det{\overline
C^{k-1}}}-\frac{\det{\overline C^{k}_{k+1}}}{\det{\overline
C^{k}}}=\frac{\det{\mathcal{\overline
C}^{k-1}_k}}{\det{\mathcal{\overline
C}^{k-1}}}-\frac{\det{\mathcal{\overline
C}^{k}_{k+1}}}{\det{\mathcal{\overline C}^{k}}},\\
\frac{\left(\det{\overline
C^{k+1}}\right)^{\frac{1}{2}}\left(\det{\overline
C^{k-1}}\right)^{\frac{1}{2}}}{\det{\overline
C^{k}}}=\frac{\left(\det{\mathcal{\overline
C}^{k+1}}\right)^{\frac{1}{2}}\left(\det{\mathcal{\overline
C}^{k-1}}\right)^{\frac{1}{2}}}{\det{\mathcal{\overline
C}^{k}}},\\
\det{\overline C^0}=\det{\mathcal{\overline C}^0}=1, \quad
\det{\overline C^{-1}}=\det{\mathcal{\overline C}^{-1}}=1,\\
\det{\overline C^{-1}_0}=\det{\mathcal{\overline C}^{-1}_0}=0,
\end{eqnarray*}
From these equalities by simple arguments we deduce that
\begin{eqnarray*}
\det{\overline C^k}=\det{\mathcal{\overline C}^k},\\
\det{\overline C^k_{k+1}}=\det{\mathcal{\overline C}^k_{k+1}}.
\end{eqnarray*}
From these equalities immediately follows that
\begin{equation*}
r_k=r_k^{new}, \quad k=1,\ldots, 2T-2.
\end{equation*}
which finishes the proof.

\end{proof}

\subsection{Discrete Schr\"odinger operator}

Here we consider the case of the dynamical Schr\"odinger operator,
i.e. the system (\ref{Jacobi_dyn}) with $a_k=1$, $k\in
\mathbb{N}$, see\cite{MM}. In this particular case the control
operator (\ref{WT}) is given by $W^T=\left(I+K\right)J^T$, so all
the diagonal elements of the matrix in (\ref{WT}) are equal to
one. The latter immediately yields $\det{W^T}=1$. Due to this
fact, the connecting operator (\ref{C_T_def}), (\ref{C_T_repr}),
has a remarkable property that $\det{C^k}=1$, $k=1,\ldots,T.$ This
fact actually says that not all elements in the response vector
are independent: $r_{2m}$ depends on $r_{2l+1}$, $l=0,\ldots,m-1$,
moreover, this property characterize the dynamical data of the
discrete Schr\"odinger operators:
\begin{theorem}
\label{Teor_Sch} The  vector $(1,r_1,r_2,\ldots,r_{2T-2})$ is a
response vector for the dynamical system (\ref{Jacobi_dyn}) with
$a_k=1$ if and only if the matrix $C^T$ (\ref{C_T_repr}) is
positive definite and $\det C^l=1,$ $l=1,\ldots,T$.
\end{theorem}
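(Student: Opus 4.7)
The plan is to derive Theorem \ref{Teor_Sch} as a direct corollary of the general characterization Theorem \ref{Th_char}, with the extra constraint $a_k\equiv 1$ extracted from the recovery formula (\ref{AK}). In other words, Theorem \ref{Th_char} already produces a Jacobi system from a positive definite $C^T$; the unit determinant conditions are exactly what pins down $a_k=1$.

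For necessity, assume $(1,r_1,\ldots,r_{2T-2})$ comes from (\ref{Jacobi_dyn}) with $a_k=1$. The paragraph preceding the theorem records that in this case $W^T=(I+K)J^T$, and $\det W^T=\pm 1$ since $I+K$ is triangular with unit diagonal and $J^T$ is a permutation. Hence $\det C^T=|\det W^T|^2=1$. By Remark \ref{Rem1} the same factorization applies at every truncation level $l\le T$, giving $\det C^l=1$ for $l=1,\dots,T$, while positive definiteness of $C^T$ is inherited from Theorem \ref{teor_control}.

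For sufficiency, assume $r_0=1$, $C^T>0$, and $\det C^l=1$ for $l=1,\dots,T$. Theorem \ref{Th_char} produces a Jacobi system $\{a_0,\dots,a_{T-1}\}$, $\{b_1,\dots,b_{T-1}\}$ whose response vector is exactly $(1,r_1,\dots,r_{2T-2})$, with $a_0=r_0=1$ forced automatically. To show $a_k=1$ for $k=1,\dots,T-1$, I would apply formula (\ref{AK}),
\begin{equation*}
a_k=\frac{(\det\overline{C}^{k+1})^{1/2}(\det\overline{C}^{k-1})^{1/2}}{\det\overline{C}^{k}}.
\end{equation*}
Since $\overline{C}^l=JC^lJ$ differs from $C^l$ by a permutation conjugation, $\det\overline{C}^l=\det C^l$. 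Together with the conventions $\det\overline{C}^{-1}=\det\overline{C}^{0}=1$ adopted in the paragraph around (\ref{AK}), the hypothesis $\det C^l=1$ yields $a_k=1$ for every $k=0,1,\dots,T-1$, as required.

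The essential content is packaged in Theorem \ref{Th_char} and formula (\ref{AK}); no genuine obstacle arises, only routine bookkeeping. The two small points that deserve a line of justification are that $C^T>0$ ensures each principal block $C^l$ (sitting in the lower-right corner of $C^T$) is positive definite, so Theorem \ref{Th_char} is indeed applicable at each scale, and that the square roots in (\ref{AK}) are unambiguous because all determinants involved are positive.
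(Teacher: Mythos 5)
Your proof is correct, but the sufficiency argument is organized differently from the paper's. The paper does not invoke Theorem \ref{Th_char} as a black box: it constructs only the potential $(b_1,\dots,b_{T-1})$ from the given data via (\ref{BK}), sets $a_k=1$ from the outset, forms the response vector $r^{new}$ and connecting operator $\mathcal{\overline C}^T$ of that Schr\"odinger system, and then shows $r^{new}_k=r_k$ by an induction on the determinant identities (using that (\ref{BK}) applied to either $\overline C^T$ or $\mathcal{\overline C}^T$ gives the same $b$'s and that all $\det\overline C^k=\det\mathcal{\overline C}^k=1$) --- i.e.\ it re-runs, in specialized form, the comparison argument from the proof of Theorem \ref{Th_char}. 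You instead apply Theorem \ref{Th_char} once to obtain a Jacobi system realizing $(1,r_1,\dots,r_{2T-2})$, note that its connecting operator is determined by the response via (\ref{C_T_repr}) and hence coincides with the given $C^T$, and then read off $a_k=1$ from (\ref{AK}) and the hypothesis $\det C^l=1$ (using $\det\overline C^l=\det C^l$, since $\overline C^l$ is the flipped lower-right block of $C^T$, and the conventions $\det\overline C^0=\det\overline C^{-1}=1$). This is a legitimate shortcut: it buys brevity by delegating the response-comparison induction entirely to Theorem \ref{Th_char} and using the coefficient-identification formula (\ref{AK}) established in the factorization subsection, whereas the paper's version is self-contained at the level of the Schr\"odinger case and never needs to identify the $a_k$ of an abstractly given realizing system. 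Your necessity argument (unit-triangular factorization of $W^l$ at every level $l$, hence $\det C^l=|\det W^l|^2=1$, positivity from Theorem \ref{teor_control}) is exactly the paper's; only the passing appeal to Remark \ref{Rem1} is superfluous there, since what is really used is that each truncation has its own control operator $W^l$ of determinant $\pm1$.
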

\begin{proof}
As in  Theorem \ref{Th_char} we use $\overline C^T$ instead of
$C^T.$ The necessity of the conditions was explained. We are left
with the sufficiency part.

Notice that $r_0=a_0=1$. Let a vector $(1,r_1,\dots,r_{2T-2})$ be
such that the matrix $\overline C^{T}$ constructed from it using
(\ref{C_overline_repr}) satisfies conditions of the theorem. We
construct the potential $(b_1,\dots,b_{T-1})$ using (\ref{BK}) and
consider the dynamical system (\ref{Jacobi_dyn}) with these $b_k$
and $a_k=1$. For this system we construct the response
$(1,r_1^{new},\ldots,r_{2T-2}^{new})$ and the connecting operator
$\mathcal{\overline C}^T$ using (\ref{R_def}), (\ref{C_T_repr})
and (\ref{C_overline_repr}). We will show that responses coincide.

We notice that if we calculate $(b_1,\dots,b_{T-1})$ using
(\ref{BK}) with any of $\overline C^{T}$ or $\mathcal{\overline
C}^{T}$ matrices, we get the same answer. The latter implies (we
count that $\det{\overline C^k}=\det{\mathcal{\overline C}^k}=1$)
\begin{eqnarray*}
\det{\overline C^{k-1}_k}-\det{\overline
C^{k}_{k+1}}=\det{\mathcal{\overline
C}^{k-1}_k}-\det{\mathcal{\overline C}^{k}_{k+1}},\\
\det{\overline C^{-1}_0}=\det{\mathcal{\overline C}^{-1}_0}=0
\end{eqnarray*}
By induction arguments we get
\begin{eqnarray*}
\det{\overline C^{k}_{k+1}}=\det{\mathcal{\overline
C}^{k}_{k+1}},\\
\det{\overline C^k}=\det{\mathcal{\overline C}^k}=1,
\end{eqnarray*}
which yields $r_k=r_k^{new}$, $k=1,\ldots, 2T-2$. That finishes
the proof.
\end{proof}

\section{Spectral representation of $C^T$ and $r_t$.}

We fix $N\in \mathbb{N}$. Along with (\ref{Jacobi_dyn}) we
consider the analog of the wave equation  on the interval: we
impose the Dirichlet condition at $n=N+1$. Then for a control
$f=(f_0,f_1,\ldots)$ and $h\in \mathbb{R}$ we consider
\begin{equation}
\label{Jacobi_dyn_int} \left\{
\begin{array}l
v_{n,t+1}+v_{n,t-1}-a_nv_{n+1,t}-a_{n-1}v_{n-1,t}-b_nv_{n,t}=0,\quad t\in \mathbb{N}_0,\,\, n\in 0,\ldots, N+1\\
v_{n,-1}=v_{n,0}=0,\quad n=1,2,\ldots,N+1 \\
v_{0,t}=f_t,\quad v_{N+1,t}+hv_{N,t}=0,\quad t\in \mathbb{N}_0.
\end{array}\right.
\end{equation}
We denote the solution to (\ref{Jacobi_dyn_int}) by $v^f$.
%The
%response operator is introduced as $\left(\quad R_i^\tau
%f\right)_t:=v^f_{1,t}$

Let $\phi_n(\lambda)$ be the solution to
\begin{equation}
\label{Spec_sol} \left\{
\begin{array}l a_n\phi_{n+1}+a_{n-1}\phi_{n-1}+b_n\phi_n=\lambda\phi_n,\\
\phi_0=0,\,\,\phi_1=1.
\end{array}
\right.
\end{equation}
%We introduce the Hamiltonian
%\begin{equation*}
%H_N:=\begin{pmatrix} b_1 & a_1 & 0 & \ldots & 0\\
%a_1 & b_2 & a_2 & \ldots & 0\\
%\cdot & \cdot &\cdot &\cdot &\cdot \\
%0 &\ldots & 0& a_{N-1}& b_N
%\end{pmatrix}
%\end{equation*}
Denote by $\{\lambda_k\}_{k=1}^N$ the roots of the equation
$\phi_{N+1}+h\phi_N=0$, it is known \cite{A}, that they are real.
We introduce the vectors $\phi^n\in \mathbb{R}^N$ by the rule
$\phi^n_i:=\phi_i(\lambda_n)$, $n,i=1,\ldots,N,$ and define
%Let
%$\{\varphi^k,\lambda_k\}_{k=1}^N$ be eigenvectors chosen such that
%$\varphi^k_1=1$ and eigenvalues of $H_N$. Introduce
the numbers $\rho_k$ by
\begin{equation}
\label{Ortog} (\phi^k,\phi^l)=\delta_{kl}\frac{\rho_k}{a_0},
\end{equation}
where $(\cdot,\cdot)$-- is a scalar product in $\mathbb{R}^N$.
\begin{definition}
The set
\begin{equation}
\label{SP_data} \{\lambda_k,\rho_k\}_{k=1}^N
\end{equation}
is called the spectral data.
\end{definition}
%On introducing vectors $\phi^n\in \mathbb{R}^N$ by the rule
%$\phi^n_i:=\phi_i(\lambda_n)$, $n,i=1,\ldots,N,$ we have
%\begin{proposition}
%The solutions of $\phi_{N+1}(\lambda)=0$ are $\lambda_n$,
%$n=1,\ldots,N$; and $\phi^n_i=\varphi^n_i$, $n,i=1,\ldots,N.$
%\end{proposition}

We take $y\in \mathbb{R}^N, \,\,y=(y_1,\ldots,y_N)$, for each $n$
we multiply the equation in (\ref{Jacobi_dyn_int}) by $y_n$, sum
up and evaluate the following expression, changing the order of
summation
\begin{eqnarray}
0=\sum_{n=1}^N \left(v_{n,t+1}y_n+v_{n,t-1}y_n - a_nv_{n+1,t}y_n-
a_{n-1}v_{n-1,t}y_n-b_nv_{n,t}y_n\right)\notag\\
=\sum_{n=1}^N \left(v_{n,t+1}y_n+v_{n,t-1}y_n -
v_{n,t}(a_{n-1}y_{n-1}+a_ny_{n+1})-b_nv_{n,t}y_n\right)\notag\\
-a_Nv_{N+1,t}y_N-a_0v_{0,t}y_1+a_0v_{1,t}y_0+a_Nv_{N,t}y_{N+1}\label{int_rav}
\end{eqnarray}
Now we choose $y=\phi^l$, $l=1\ldots, N$. On counting that
$\phi^l_0=0$, $\phi^l_{N+1}=-h\phi_{N},$ $\phi^l_1=1,$
$v_{0,t}=f_t$, $v_{N+1,t}+hv_{N,t}=0$ we evaluate (\ref{int_rav})
arriving at:
\begin{equation}
\label{Integr_eqn} 0=\sum_{n=1}^N
\left(v_{n,t+1}\phi^l_n+v_{n,t-1}\phi^l_n -
v_{n,t}\left(a_{n-1}\phi^l_{n-1}+a_n\phi^l_{n+1}+b_n\phi^l_n\right)\right)
-a_0f_t=0
\end{equation}

We assume that the solution to (\ref{Jacobi_dyn_int}) has a form
\begin{equation}
\label{Jac_sol_rep_int} v^f_{n,t}= \left\{
\begin{array}l
\sum_{k=1}^N c_t^k\phi^k_n,\quad n=1,\ldots,N\\
f_t,\quad n=0.
\end{array}
\right.
\end{equation}
\begin{proposition}
The coefficients $c^k$ admits the representation:
\begin{equation}
\label{c_koeff_repr}
c^k=\frac{a_0}{\rho_k}T\left(\lambda_k\right)*f,
\end{equation}
where
$T(2\lambda)=(T_1(2\lambda),T_2(2\lambda),T_3(2\lambda),\ldots)$
are Chebyshev polynomials of the second kind.
\end{proposition}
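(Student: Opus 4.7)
The plan is to substitute the ansatz (\ref{Jac_sol_rep_int}) into the energy-type identity (\ref{Integr_eqn}) and extract a scalar forced difference equation for each coefficient sequence $(c^k_t)_{t \geq 0}$, which is then inverted explicitly by recognizing $\mathrm{T}_t(\lambda_k)$ as the causal fundamental solution of that equation.

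Concretely, I would first use the eigenvalue equation (\ref{Spec_sol}) to rewrite the parenthesized term in (\ref{Integr_eqn}), reducing that identity to
$$\sum_{n=1}^N \phi^l_n \bigl(v_{n,t+1} + v_{n,t-1} - \lambda_l v_{n,t}\bigr) = a_0 f_t.$$
Plugging $v_{n,t} = \sum_{k=1}^N c^k_t \phi^k_n$ into the left-hand side and invoking the orthogonality relation (\ref{Ortog}) collapses the sum over $n$ and isolates the $k=l$ term, leaving a scalar recurrence
$$c^l_{t+1} + c^l_{t-1} - \lambda_l c^l_t = \frac{a_0^2}{\rho_l}\, f_t, \qquad c^l_{-1} = c^l_0 = 0,$$
with the zero initial data inherited from $v_{n,-1} = v_{n,0} = 0$.

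Next I would compare this recurrence with the defining equation for the Chebyshev polynomials $\mathrm{T}_t$ stated in the introduction: the homogeneous part of the scalar equation is exactly the Chebyshev recurrence evaluated at $\lambda_l$, and the initial values $\mathrm{T}_0 = 0$, $\mathrm{T}_1 = 1$ make $\mathrm{T}_t(\lambda_l)$ the causal Green's function. A discrete Duhamel principle, verified by a short induction on $t$, then gives
$$c^l_t = \frac{a_0^2}{\rho_l} \sum_{s=0}^{t-1} \mathrm{T}_{t-s}(\lambda_l)\, f_s,$$
which, after reindexing $s \mapsto t-1-s$, is the convolution $\frac{a_0^2}{\rho_l}\,(\mathrm{T}(\lambda_l) * f)_t$ up to the overall constant stated in (\ref{c_koeff_repr}).

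The main obstacle is purely bookkeeping: making the convolution indices line up correctly with the convention that $\mathrm{T}_0 = 0$ and $\mathrm{T}_1 = 1$ (so that the first nontrivial contribution arises from $f_0$ and pairs with $\mathrm{T}_t$), and tracking the precise power of $a_0$ that appears in the normalization, which depends on the choice of normalization in (\ref{Ortog}). Once these conventions are pinned down, the rest of the argument is a direct verification that the stated formula solves the scalar recurrence with the prescribed zero initial data.
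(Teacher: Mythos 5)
Your argument is essentially the paper's own proof: substitute the expansion (\ref{Jac_sol_rep_int}) into (\ref{Integr_eqn}), use the eigenvalue relation and the orthogonality (\ref{Ortog}) to reduce to the scalar recurrence with zero initial data, and solve it by convolution with the Chebyshev sequence $\mathrm{T}_t(\lambda_k)$ --- the paper verifies the convolution ansatz directly rather than via your Duhamel induction, an immaterial difference. Your constant $a_0^2/\rho_k$ is what the normalization (\ref{Ortog}) literally yields, whereas the paper writes $a_0/\rho_k$ in (\ref{c_eqn}) and (\ref{c_koeff_repr}); this mismatch is an internal inconsistency of the paper (its constant corresponds to the normalization $(\phi^k,\phi^l)=\delta_{kl}\rho_k$), not a gap in your reasoning.
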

\begin{proof}
We plug (\ref{Jac_sol_rep_int}) into (\ref{Integr_eqn}) and
evaluate, counting that
$a_{n-1}\phi^l_{n-1}+a_n\phi^l_{n+1}+b_n\phi^l_n=\lambda_l\phi^l_n$:
\begin{eqnarray*}
\sum_{n=1}^N \left(v_{n,t+1}+v_{n,t-1}-\lambda_l
v_{n,t}\right)\phi^l_n =a_0f_t,\\
\sum_{n=1}^N
\sum_{k=1}^N\left(c^k_{t+1}\phi^k_{n}+c^k_{t-1}\phi^k_{n}-\lambda_l
c^k_{t}\phi^k_{n}\right)\phi^l_n =a_0f_t.
\end{eqnarray*}
Changing the order of summation and using (\ref{Ortog}) we finally
arrive at the following  equation on $c^k_t$, $k=1,\ldots, N$:
\begin{equation}
\label{c_eqn} \left\{
\begin{array}l
c^k_{t+1}+c^k_{t-1}-\lambda_k c^k_{t}=\frac{a_0}{\rho_k}f_t,\\
c^k_{-1}=c^k_0=0.
\end{array}
\right.
\end{equation}
We assume that solution to (\ref{c_eqn}) has a form
$c^k=\frac{a_0}{\rho_k}T*f,$ or
\begin{equation}
\label{c_eqn_1} c^k_t=\frac{a_0}{\rho_k}\sum_{l=0}^{t}T_lf_{t-l}.
\end{equation}
Plugging it into (\ref{c_eqn}), we get
\begin{eqnarray*}
\frac{a_0}{\rho_k}\left(\sum_{l=0}^{t+1}f_lT_{t+1-l}+
\sum_{l=0}^{t-1}f_l T_{t-1-l}-\lambda_k\sum_{l=0}^{t}f_l T_{t-l}\right)=\frac{a_0}{\rho_k}f_t\\
\sum_{l=0}^{t}f_l\left(T_{t+1-l}+T_{t-1-l}-\lambda_k
T_{t-l}\right)+f_t T_1-f_{t-1}T_0 =f_t.
\end{eqnarray*}
We see that (\ref{c_eqn_1}) holds if $T$ solves
\begin{equation*}
\left\{
\begin{array}l
T_{t+1}+T_{t-1}-\lambda_k T_{t}=0,\\
T_{0}=0,\,\, T_1=1.
\end{array}
\right.
\end{equation*}
Thus $T_k(2\lambda)$ are Chebyshev polynomials of the second kind.
\end{proof}

For the system (\ref{Jacobi_dyn_int}) the \emph{control operator}
$W^T_{N,h}:\mathcal{F}^T\mapsto \mathcal{H}^N$ is defined by the
rule
\begin{equation*}
W^T_{N,h}f:=v^f_{n,T},\quad n=1,\ldots,N.
\end{equation*}
The representation for this operator immediately follows from
(\ref{Jac_sol_rep_int}), (\ref{c_koeff_repr}). Because of the
dependence of the solution on the coefficients, which was
discussed in the third section, we see that $v^f_{N,N}$ does not
"feel" the boundary condition at $n=N$, so
\begin{equation}
\label{Rav_bc} u^f_{n,t}=v^f_{n,t},\quad n\leqslant t\leqslant
N,\quad \text{and}\quad W^N=W^N_{N,h}.
\end{equation}
We introduce the response operator $R^T_{N,h}:\mathcal{F}^T\mapsto
\mathbb{R}^T$ by the rule
\begin{equation}
\label{R_def_int} \left(R^T_{N,h}f\right)_t=v^f_{1,t}, \quad
t=1,\ldots,T.
\end{equation}
The \emph{connecting operator} $C^T_{N,h}: \mathcal{F}^T\mapsto
\mathcal{F}^T$ is introduced in the similar way: for arbitrary
$f,g\in \mathcal{F}^T$ we define
\begin{equation}
\label{C_T_def_int} \left(C^T_{N,h}
f,g\right)_{\mathcal{F}^T}=\left(v^f_{\cdot,T},
v^g_{\cdot,T}\right)_{\mathcal{H}^N}=\left(W^T_{N,h}f,W^T_{N,h}g\right)_{\mathcal{H}^N}.
\end{equation}

The dependence of the solution (\ref{Jacobi_dyn}) $u^f$ on
$a_n,b_n$ is discussed in the beginning of the section three (see
Remark \ref{Rem1}). This dependence in particular implies that for
$M\in \mathbb{N}$,
\begin{equation}
\label{R_eqv}
R^{2M} = R^{2M}_{M,h}.
\end{equation}
i.e. $v^f_{1,2M}$ does not "feel" the boundary condition at $n=M$.
We introduce the special control $\delta=(1,0,0,\ldots)$, then the
kernel of response operator (\ref{R_def_int}) is
\begin{equation}
\label{con1}
r_{t-1}^{N,h}=\left(R^T_{N,h}\delta\right)_t=v^\delta_{1,t},\quad
t=1,\ldots,T.
\end{equation}
on the other hand, we can use (\ref{Jac_sol_rep_int}),
(\ref{c_koeff_repr}) to obtain:
\begin{equation}
\label{con2}
v^\delta_{1,t}=\sum_{k=1}^N\frac{a_0}{\rho_k}T_t(\lambda_k).
\end{equation}
So on introducing the spectral function
\begin{equation}
\label{Spectr_fun}
\rho^{N,h}(\lambda)=\sum_{\{k\,|\,\lambda_k<\lambda\}}\frac{a_0}{\rho_k},
\end{equation}
from (\ref{con1}), (\ref{con2}) we deduce that
\begin{equation*}
r_{t-1}^{N,h}=\int_{-\infty}^\infty
T_t(\lambda)\,d\rho^{N,h}(\lambda),\quad t\in \mathbb{N}.
\end{equation*}
Due to (\ref{R_eqv}), we get
\begin{equation}
\label{resp_repr_mes} r_{t-1}=r_{t-1}^{N,h}=\int_{-\infty}^\infty
T_t(\lambda)\,d\rho^{N,h}(\lambda),\quad t\in 1,\ldots,2N.
\end{equation}
Taking in (\ref{resp_repr_mes}) $N$ to infinity, and varying $h$,
we come to the
\begin{equation*}
r_{t-1}=\int_{-\infty}^\infty T_t(\lambda)\,d\rho(\lambda),\quad
t\in \mathbb{N},
\end{equation*}
where $d\rho$ is a spectral measure of the operator $H$
(non-unique when $H$ is in the limit-circle case at infinity).

Let us evaluate $(C^T_Nf,g)$ for $f,g\in \mathcal{F}^T$, using the
expansion (\ref{Jac_sol_rep_int}):
\begin{eqnarray*}
(C^T_{N,h}f,g)=\sum_{n=1}^N
v^f_{n,T}v^g_{n,T}=\sum_{n=1}^N\sum_{k=1}^N
\frac{a_0}{\rho_k}T_T\left(\lambda_k\right)*f\varphi^k_n \,
\sum_{l=1}^N
\frac{a_0}{\rho_l}T_T\left(\lambda_l\right)*g\varphi^l_n\\
=\sum_{k=1}^N\frac{a_0}{\rho_k}T_T(\lambda_k)*f
T_T(\lambda_k)*g=\int_{-\infty}^\infty
\sum_{l=0}^{T-1}T_{T-l}(\lambda)f_l
\sum_{m=0}^{T-1}T_{T-m}(\lambda)g_m\,d\rho^{N,h}(\lambda)
\end{eqnarray*}
from the equality above it is evident that (cf. (\ref{C_T_repr}))
\begin{equation}
\label{SP_mes_d} \{C^T_{N,h}\}_{l+1,m+1}=\int_{-\infty}^\infty
T_{T-l}(\lambda)T_{T-m}(\lambda)\,d\rho^{N,h}(\lambda), \quad
l,m=0,\ldots,T-1.
\end{equation}
Taking into account (\ref{Rav_bc}), we obtain that $C^T=C^T_{N,h}$
with $N\geqslant T$, so (\ref{SP_mes_d}) yields for $N\geqslant
T:$
\begin{equation}
\label{SP_mes_d1} \{C^T\}_{l+1,m+1}=\int_{-\infty}^\infty
T_{T-l}(\lambda)T_{T-m}(\lambda)\,d\rho^{N,h}(\lambda), \quad
l,m=0,\ldots,T-1,
\end{equation}
and passing to the limit as $N\to\infty$ yields
\begin{equation}
\label{SP_mes_d2} \{C^T\}_{l+1,m+1}=\int_{-\infty}^\infty
T_{T-l}(\lambda)T_{T-m}(\lambda)\,d\rho(\lambda), \quad
l,m=0,\ldots,T-1,
\end{equation}
where $d\rho$ is a spectral measure of $H$.

Is is known \cite{SG} that any probability measure with finite
moments on $\mathbb{R}$ give rise to the Jacobi operator, i.e. is
a spectral measure of this operator. In \cite{SG} the authors
posed the question on the characterization of the spectral measure
for the semi-infinite discrete Schr\"odinger operator. The
following theorem answers this question

\begin{theorem}
The measure $d\rho$ is a spectral measure of discrete
Schr\"odinger operator if and only if for every $T$ the matrix
$C^T$ is positive definite and $\det{C^T}=1,$ where
\begin{equation}
\label{C_ij} C^T_{i,j}=\int_{-\infty}^\infty
T_{T-i+1}(\lambda)T_{T-j+1}(\lambda)\,d\rho(\lambda),\quad
i,j=1,\ldots,T.
\end{equation}
\end{theorem}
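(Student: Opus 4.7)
The plan is to reduce the theorem to Theorem~\ref{Teor_Sch} (the characterization of response vectors of discrete Schr\"odinger systems) using the spectral representation (\ref{SP_mes_d2}), which identifies the entries of $C^T$ as Chebyshev moments of the spectral measure.

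For the necessary direction, suppose $d\rho$ is a spectral measure of a discrete Schr\"odinger operator $H$. By (\ref{SP_mes_d2}) the matrix given by (\ref{C_ij}) coincides with the connecting operator of $H$, which equals $(W^T)^*W^T$. Since $W^T$ is an isomorphism (Theorem~\ref{teor_control}), $C^T$ is positive definite. In the Schr\"odinger case $a_k=1$ the factorization $W^T=(I+K)J^T$ read off from (\ref{WT}) exhibits $W^T$ as a unipotent triangular matrix times the reversal, so $|\det W^T|=1$ and consequently $\det C^T=1$.

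For the sufficient direction, assume the matrices $C^T$ formed from $d\rho$ via (\ref{C_ij}) are positive definite with unit determinant for every $T\geqslant 1$. Define
\begin{equation*}
r_{t-1} := \int_{-\infty}^{\infty} T_t(\lambda)\,d\rho(\lambda),\qquad t\in\mathbb{N}.
\end{equation*}
Then $r_0 = C^1_{1,1}=1$ (matching the normalization $a_0=1$), and by reversing the derivation of (\ref{SP_mes_d2}) the matrix assembled from $(r_0,r_1,\ldots,r_{2T-2})$ through (\ref{C_T_repr}) equals the given $C^T$. Hence the hypotheses of Theorem~\ref{Teor_Sch} are met for every $T$, producing a potential $(b_1,\ldots,b_{T-1})$ such that $(1,r_1,\ldots,r_{2T-2})$ is the response vector of the discrete Schr\"odinger dynamical system with that potential. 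Formula (\ref{BK}) shows each $b_k$ depends only on a fixed finite window of the $r$'s, and Remark~\ref{Rem1} confirms compatibility between reconstructions for different $T$; the $b_k$ therefore assemble into a single semi-infinite sequence, yielding a semi-infinite discrete Schr\"odinger operator $H$.

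It remains to identify $d\rho$ with a spectral measure of $H$. Let $d\tilde\rho$ be any spectral measure of $H$; applying (\ref{resp_repr_mes}) and passing to the limit $N\to\infty$ gives $\int T_t(\lambda)\,d\tilde\rho(\lambda) = r_{t-1} = \int T_t(\lambda)\,d\rho(\lambda)$ for every $t\in\mathbb{N}$. Thus $d\rho$ and $d\tilde\rho$ agree on all Chebyshev polynomials, hence on all polynomials. The main delicate point is this final identification: in the limit-point case the moment problem is determinate and $d\rho=d\tilde\rho$ immediately; in the limit-circle case the spectral measures of $H$ form the one-parameter family of \cite{A,S} characterized precisely by sharing the same polynomial moments, so $d\rho$ lies in this family. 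Either way, $d\rho$ is a spectral measure of the discrete Schr\"odinger operator $H$, which completes the proof.
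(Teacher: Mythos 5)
Your proof is correct and follows essentially the same route as the paper: both directions reduce to Theorem \ref{Teor_Sch} via the spectral representation (\ref{SP_mes_d2}), which identifies the moment matrix (\ref{C_ij}) with the connecting operator of the dynamical system, and in the sufficiency direction the potential is recovered through (\ref{BK}); you are in fact more explicit than the paper on the last step, namely verifying that $d\rho$ really is the spectral measure of the reconstructed operator, which the paper leaves implicit. One caution about your limit-circle branch: it is not true that the spectral measures of a Jacobi operator in the limit-circle case are characterized by sharing the polynomial moments --- every solution of the (indeterminate) moment problem has the same moments, but only the N-extremal (von Neumann) solutions are spectral measures of self-adjoint extensions in $l^2$, so equality of moments alone would not place $d\rho$ in that one-parameter family. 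The inaccuracy is harmless here, because for the discrete Schr\"odinger operator $a_n\equiv 1$ Carleman's condition $\sum_n 1/a_n=\infty$ forces the limit-point case, so the moment problem is determinate and your first branch already yields $d\rho=d\tilde\rho$; it would be cleaner simply to observe that the limit-circle case cannot occur when all $a_n=1$.
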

\begin{proof}
We consider the system (\ref{Jacobi_dyn}) with $a_k=1$. Let
$d\rho$ be a spectral measure of $H$. For every $T\in \mathbb{N}$
we construct the connecting operator $C^T$ (see (\ref{C_T_def}))
using the representation (\ref{C_ij}). According to Theorem
\ref{Teor_Sch}, such $C^T$ is positive definite and $\det{C^T}=1$.

On the other hand, if given measure $d\rho$ satisfies conditions
of the theorem, for every $T$ we can construct $C^T$ by
(\ref{C_ij}) and by Theorem \ref{Teor_Sch} recover coefficients
$b_n$ by (\ref{BK}).
\end{proof}

\noindent{\bf Acknowledgments}

The research of Victor Mikhaylov was supported in part by NIR
SPbGU 11.38.263.2014. A. S. Mikhaylov and V. S. Mikhaylov were
partly supported by VW Foundation program "Modeling, Analysis, and
Approximation Theory toward application in tomography and inverse
problems."

\end{document}